\theoremstyle{plain}
\newtheorem{thm}{Theorem}[section]
\newtheorem{prop}[thm]{Proposition}
\newtheorem{cor}[thm]{Corollary}
\newtheorem{lem}[thm]{Lemma}
\newtheorem{rem}[thm]{Remark}
\newtheorem{defn}[thm]{Definition}
\theoremstyle{definition}
\newtheorem{exam}[thm]{Example}
\newcommand{\E}{\mathbb{E}}
\renewcommand{\P}{\mathbb{P}}
\newcommand{\N}{\mathbb{N}}
\newcommand{\R}{\mathbb{R}}
\newcommand{\D}{\mathcal{D}}
\newcommand{\T}{\mathcal{T}}
\newcommand{\down}{\shpos}
\newcommand{\G}{\mathsf{G}}
\renewcommand{\H}{\mathsf{H}}
\renewcommand{\d}{\mathrm{d}}
\renewcommand{\r}{\mathsf{r}}
\begin{document}
\title{Transience and recurrence of rotor-router walks on directed covers of graphs}
\author{Wilfried Huss\footnote{Graz University of Technology, Austria.}, 
Ecaterina Sava-Huss\footnote{Graz University of Technology, Austria. Research supported by the FWF program W 1230-N13.}}
\maketitle

\begin{abstract}
The aim of this note is to extend the result of {\sc Angel and Holroyd} 
\cite{angel_holroyd_2011} concerning the transience and the recurrence
of transfinite rotor-router walks, for random initial configuration of rotors
on homogeneous trees. We address the same question on directed covers of finite graphs,
which are also called trees with finitely many cone types or periodic trees.
Furthermore, we provide an example of a directed cover such that the rotor-router walk
can be either recurrent or transient, depending only on the planar embedding of the periodic tree.
\end{abstract}

\textbf{Keywords:} graphs, directed covers, rotor-router walks,
multitype branching process, recurrence, transience.

\textbf{Mathematics Subject Classification:} 05C05; 05C25; 82C20.

\section{Introduction}

Suppose we are given a finite strongly connected and directed graph $\G$ with
adjacency matrix $D=(d_{ij})_{i,j\in\G}$. Using $\G$, one can construct
a labelled rooted tree $\T$ in the following way. Start with a root vertex which is labelled
with some $i\in\G$. Then define the tree recursively such that, if $x$ is a vertex in $\T$
with label $i\in\G$, then $x$ has $d_{ij}$ successors with label $j$. The tree $\T$ is called the 
\emph{directed cover} of $\G$. Random walks on directed covers of graphs
have been studied by \textsc{Takacs} \cite{Takacs97randomwalk}, 
\textsc{Nagnibeda and Woess} \cite{woess_nagnibeda}. 
On infinite graphs, their methods have been extended by \textsc{Gilch and Müller} \cite{gilch_mueller_covers}.

\emph{Rotor-router walks} have been first introduced into the physics literature under the 
name \emph{Eulerian walks} by \textsc{Priezzhev, D.Dhar et al} \cite{PhysRevLett.77.5079}
as a model of \emph{self organized criticality}, a concept established by \textsc{Bak,
Tang and Wiesenfeld} \cite{back_tang_wiesenfeld}. To define a rotor-router walk on a graph
consider on each vertex of the graph an arrow (the rotor) pointing to one of the neighbours
of the vertex. A particle performing a rotor-router walk first changes the 
rotor at its current position to point to the next neighbour, in a fixed order chosen at the beginning, 
and then moves to the neighbour the rotor is now pointing at. These walks have received increased
attention in the last years, and in many settings there is remarkable agreement between
the behaviour of rotor-router walks and the expected behaviour of random walks. For example, see
\textsc{Holroyd and Propp} \cite{holroyd_propp}, who showed that many quantities
associated to rotor-router walks such as normalized hitting frequencies, hitting
times and occupation frequencies, are concentrated around their expected values
for random walks. 

For a bibliographical picture in this context, see also
{\sc Cooper and Spencer} \cite{cooper_spencer_2006},
{\sc Doerr and Friedrich} \cite{doerr_friedrich_2007}, 
{\sc Angel and Holroyd} \cite{angel_holroyd_2011}, 
and also \textsc{Cooper, Doerr et al.} \cite{cooper_doerr_friedrich_spencer_2006}. On the other hand,
rotor-router walks and random walks can also have striking differences. 
For example, in questions concerning recurrence and transience of rotor-router walks on 
homogeneous trees, this has been proven by {\sc Landau and Levine} \cite{landau_levine_2009}.
For random initial configurations on homogeneous trees, see {\sc Angel and Holroyd}
\cite{angel_holroyd_2011}.
Furthermore, one can use rotor-router walks in order to solve questions regarding the 
behaviour of random walks: for example, in \cite{huss_sava_rotor} we have used a special rotor-router
process in order to determine the harmonic measure, that is, the exit distribution
of a random walk from a finite subset of a graph.

In this note, we extend the result of {\sc Angel and Holroyd} 
\cite[Theorem 6]{angel_holroyd_2011} for rotor-router walks
with random initial configuration of rotors on directed covers of graphs.
The proofs are a generalization of \cite{angel_holroyd_2011}
and are based on the extinction/survival of an appropriate multitype branching process (MBP).
Such a MBP encodes the subtree on which rotor-router particles
can reach infinity. We also give several examples where different phase transitions 
may appear. We give a graph $\G$ with two types of vertices and consider its directed cover $\T$ with all its possible
planar embeddings in the plane. For the same random initial configuration of rotors on these trees,
we show that the behaviour of the rotor-router walk depends dramatically on the planar embedding.
This corresponds to the fact that different rotor sequence gives rise to different behaviour of the
rotor-router walk.

\section{Preliminaries}\label{sec:preliminaries}

\paragraph{Graphs and Trees.}
Let $\G=(V,E)$ be a locally finite and connected directed multigraph, with vertex set $V$
and edge set $E$. For ease of presentation, we shall identify the graph $\G$ with its vertex 
set $V$, i.e., $i\in \G$ means $i\in V$. If $(i,j)$ is an edge of $\G$, we write $i\sim_{\G}j$,
and write $\d(i,j)$ for the {\em graph distance}.
Let $D=(d_{ij})_{i,j\in\G}$ be the \emph{adjacency matrix} of $\G$, where $d_{ij}$ is
the number of directed edges connecting $i$ to $j$.
We write $d_i$ for the sum of the entries in the $i$-th row of $D$, that is
$d_{i}=\sum_{j\in\G} d_{ij}$ is the {\em degree} of the vertex $i$. 

A {\em tree} $\T$ is a connected, cycle-free graph. A {\em rooted tree} is a tree with a distinguished vertex
$r$, called {\em the root}. For a vertex $x\in\T$, denote by $|x|$ the {\em height} of $x$, that is the graph distance from the root to $x$.
For $h\in\N$, define the {\em truncated tree} $\T^h = \{x\in\T: |x| \leq h\}$ to be the subgraph of $\T$ induced by the
vertices at height smaller or equal to $h$. 
For a vertex $x\in \T\setminus\{r\}$, denote by $x^{(0)}$ its {\em ancestor}, that is
the unique neighbour of $x$ closer to the root $r$.
It will be convenient to attach an additional vertex $r^{(0)}$ to the root $r$, which will
be considered in the following as a sink vertex, which we call the \emph{down sink} $s_\down=r^{(0)}$.
Additionally we fix a planar embedding of $\T$ and enumerate
the neighbours of a vertex $x\in \T$ in counter-clockwise order $\big(x^{(0)}, x^{(1)}, \ldots, x^{(d_x-1)}\big)$
beginning with the ancestor.
We will call a vertex $y$ a \emph{descendant} of $x$, if $x$ lies on the unique shortest path from $y$
to the root $r$. A descendant of $x$, which is also a neighbour of $x$, will be called a \emph{child}.
The \emph{principal branches} of $\T$ are the subtrees rooted at the children of the root $r$.


\paragraph{Directed Covers of Graphs.}
Suppose now that $\G$ is a finite, directed and strongly connected multigraph with  adjacency matrix $D=(d_{ij})$. 
Let $m$ be the cardinality of the vertices of $\G$, and label the vertices of $\G$ by $\{1,2,\ldots, m \}$.
The {\em directed cover} $\T$ of $\G$ is defined recursively as a rooted tree
$\T$ whose vertices are labelled by the vertex set $\{1,2,\ldots, m \}$ of $\G$.
The root $r$ of $\T$ is labelled with some $i\in\G$. Recursively, if $x$ is a vertex in $\T$ with
label $i\in \G$, then $x$ has $d_{ij}$ descendants with label $j$. We define
the {\em label function} $\tau:\T\to\G$ as the map that associates to each vertex in $\T$ its label 
in $\G$. The label $\tau(x)$ of a vertex $x$ will be also called the {\em type} of $x$. 
For a vertex $x\in\T$, we will not only need its type, but also the types of its children.
In order to keep track of the type of a vertex and the types of its children we
introduce the {\em generation function} $\chi = \left(\chi_i\right)_{i\in\G}$ with 
$\chi_i:\{1,\ldots,d_i\}\to\G$. For a vertex $x$ of type $i$,
$\chi_i(k)$ represents the type of the $k$-th child $x^{(k)}$ of $x$, i.e.,
\begin{equation*}
\text{if } \tau(x)=i \text{ then } \chi_i(k)=\tau(x^{(k)}), \text{ for } k=1,\ldots,d_i.
\end{equation*}
As the neighbours $\left(x^{(0)},\ldots, x^{(d_{\tau(x)})}\right)$ of any vertex $x$ are drawn in clockwise order,
the generation function $\chi$ also fixes the planar embedding of the tree and thus defines $\T$ uniquely
as a planted plane tree. The tree $\mathcal{T}$ constructed
in this way is called the \emph{directed cover} of $\G$. Such trees are also known as
\emph{periodic trees}, see {\sc Lyons} \cite{LP:book}, or \emph{trees with finitely many cone types} in 
{\sc Nagnibeda and Woess} \cite{woess_nagnibeda}. 
The graph $\G$ is called the \emph{base graph} or the \emph{generating graph} for the tree $\T$.
We write $\T_i$ for a tree with root $r$ of type $i$, that is $\tau(r)=i$.

\begin{figure}
\centering
\begin{tikzpicture}[scale=0.65]
\begin{scope}[xshift=-3cm, yshift=1cm]
\node (chi) at (0,0) {
\begin{tabular}{lc|cc}
\multicolumn{2}{c}{} & \multicolumn{2}{l}{\scalebox{0.7}{$k\rightarrow$}} \\
\multicolumn{2}{c|}{$\chi_i(k)$} & 1 & 2 \\
\hline
\multirow{2}{*}{\scalebox{0.7}{\rotatebox{-90}{\rotatebox{90}{$i$} $\rightarrow$}}}
  & 1 & 2 & \\
  & 2 & 2 & 1
\end{tabular}
};
\end{scope}
\begin{scope}[xscale=0.8]
\coordinate (n0) at (4.65625,0);
\coordinate (n1) at (4.65625,1);
\coordinate (n3) at (1.625,2);
\coordinate (n22) at (1.625,3);
\coordinate (n24) at (0.5,4);
\coordinate (n30) at (0.5,5);
\coordinate (n32) at (0,6);
\coordinate (n31) at (1,6);
\coordinate (n23) at (2.75,4);
\coordinate (n26) at (2.0,5);
\coordinate (n29) at (2,6);
\coordinate (n25) at (3.5,5);
\coordinate (n28) at (3,6);
\coordinate (n27) at (4,6);
\coordinate (n2) at (7.6875,2);
\coordinate (n5) at (5.75,3);
\coordinate (n16) at (5.75,4);
\coordinate (n18) at (5.0,5);
\coordinate (n21) at (5,6);
\coordinate (n17) at (6.5,5);
\coordinate (n20) at (6,6);
\coordinate (n19) at (7,6);
\coordinate (n4) at (9.625,3);
\coordinate (n7) at (8.5,4);
\coordinate (n13) at (8.5,5);
\coordinate (n15) at (8,6);
\coordinate (n14) at (9,6);
\coordinate (n6) at (10.75,4);
\coordinate (n9) at (10.0,5);
\coordinate (n12) at (10,6);
\coordinate (n8) at (11.5,5);
\coordinate (n11) at (11,6);
\coordinate (n10) at (12,6);
\end{scope}

\draw (n1) -- (n0);
\draw (n3) -- (n1);
\draw (n22) -- (n3);
\draw (n24) -- (n22);
\draw (n30) -- (n24);
\draw (n32) -- (n30);
\draw (n31) -- (n30);
\draw (n23) -- (n22);
\draw (n26) -- (n23);
\draw (n29) -- (n26);
\draw (n25) -- (n23);
\draw (n28) -- (n25);
\draw (n27) -- (n25);
\draw (n2) -- (n1);
\draw (n5) -- (n2);
\draw (n16) -- (n5);
\draw (n18) -- (n16);
\draw (n21) -- (n18);
\draw (n17) -- (n16);
\draw (n20) -- (n17);
\draw (n19) -- (n17);
\draw (n4) -- (n2);
\draw (n7) -- (n4);
\draw (n13) -- (n7);
\draw (n15) -- (n13);
\draw (n14) -- (n13);
\draw (n6) -- (n4);
\draw (n9) -- (n6);
\draw (n12) -- (n9);
\draw (n8) -- (n6);
\draw (n11) -- (n8);
\draw (n10) -- (n8);

\fill[black] (n0) circle (2pt);
\fill[red] (n1) circle (2pt);
\fill[blue] (n3) circle (2pt);
\fill[red] (n22) circle (2pt);
\fill[blue] (n24) circle (2pt);
\fill[red] (n30) circle (2pt);
\fill[blue] (n32) circle (2pt);
\fill[red] (n31) circle (2pt);
\fill[red] (n23) circle (2pt);
\fill[blue] (n26) circle (2pt);
\fill[red] (n29) circle (2pt);
\fill[red] (n25) circle (2pt);
\fill[blue] (n28) circle (2pt);
\fill[red] (n27) circle (2pt);
\fill[red] (n2) circle (2pt);
\fill[blue] (n5) circle (2pt);
\fill[red] (n16) circle (2pt);
\fill[blue] (n18) circle (2pt);
\fill[red] (n21) circle (2pt);
\fill[red] (n17) circle (2pt);
\fill[blue] (n20) circle (2pt);
\fill[red] (n19) circle (2pt);
\fill[red] (n4) circle (2pt);
\fill[blue] (n7) circle (2pt);
\fill[red] (n13) circle (2pt);
\fill[blue] (n15) circle (2pt);
\fill[red] (n14) circle (2pt);
\fill[red] (n6) circle (2pt);
\fill[blue] (n9) circle (2pt);
\fill[red] (n12) circle (2pt);
\fill[red] (n8) circle (2pt);
\fill[blue] (n11) circle (2pt);
\fill[red] (n10) circle (2pt);

\node (sdown) [right=0.125 of n0]  {$s_\down$};
\node (r) [right=0.125 of n1, yshift=-2]  {$r$};
\end{tikzpicture}
\caption{\label{fig:fib_tree} A finite piece of the Fibonacci tree $\T_2$ with root of type $2$, for the planar embedding given by $\chi$.}
\end{figure}
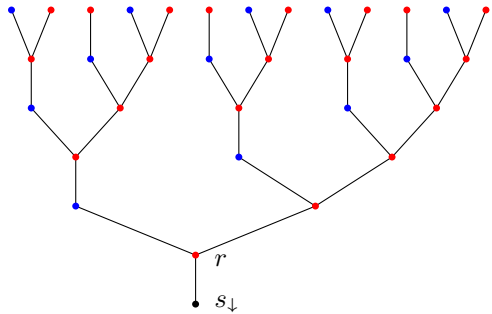

\begin{exam}The \emph{Fibonacci tree} is the directed cover of the
graph $\G$ on two vertices with adjacency matrix 
$
\begin{pmatrix}
0 & 1 \\
1 & 1 
\end{pmatrix}.
$
The \emph{ $(\alpha,\beta)$ bi-regular tree} with parameters $\alpha,\beta\in\N$ is the directed cover of the
graph $\G$ on two vertices with adjacency matrix 
$
\begin{pmatrix}
0 & \alpha \\
\beta & 0 
\end{pmatrix}
$.
\end{exam}


\paragraph{Rotor-Router Walks.}
\label{subsec:rr_walks}

On a locally finite and connected graph $\H$, a \emph{rotor-router walk} is defined as follows. 
For each vertex $x\in\H$ fix a cyclic ordering $c(x)$ of its neighbours: $c(x)=\big(x^{(0)},x^{(1)},\ldots,x^{(\deg(x)-1)}\big)$,
where  $x\sim_{\H} x^{(i)}$ for all $i=0,1,\ldots ,\deg(x)-1$ and $\deg(x)$ is the degree
of $x$. The ordering $c(x)$ is called the
\emph{rotor sequence} of $x$. A \emph{rotor configuration} is a function
$\rho: \H\to \H$, with $\rho(x) \sim_{\H} x$, for all $x\in \H$.
By abuse of notation, we write $\rho(x)=i$ if the rotor at $x$ points
to the neighbour $x^{(i)}$. A rotor-router walk is defined by the following rule.
Let $x$ be the current position of the particle, and $\rho(x) = i$ the state of
the rotor at $x$. In one step of the walk the following happens. First the position of the
rotor at $x$ is incremented to point to the next neighbour $x^{(i+1)}$ in the ordering $c(x)$, that is,
$\rho(x)$ is set to $i+1$ (with addition performed modulo $\deg(x)$). Then the
particle moves to position $x^{(i+1)}$. The rotor-router walk is obtained by repeatedly
applying this rule.

\paragraph{Recurrence and Transience} 
Suppose now that the graph $\H$ is 
infinite and connected, and let $o$ be a fixed vertex in $\H$, 
the root. Start with a particle at $o$ and let it perform
a rotor-router walk stopped at the first return to $o$. Either the particle
returns to $o$ after a finite number of steps ({\em recurrence}), or it escapes
to infinity without returning to $o$, and visiting each vertex only finitely 
many times ({\em transience}). In both cases, the positions of the rotors after
the walk is complete are well defined. Before starting a new particle at the root, we
do not reset the configuration of rotors. We then start a new particle at the root $o$,
and repeat the above procedure, and so on. This type of rotor-router walk is called
\emph{transfinite rotor-router walk}, see \textsc{Holroyd and Propp} \cite{holroyd_propp}.
Let $E_n=E_n(\H)$ be number of particles that escape
to infinity after $n$ rotor-router walks are run from $o$ in this way. 
The following result, due to {\sc Schramm} states that a rotor-router walk
is no more transient than a random walk. See \cite[Theorem 10]{holroyd_propp} for a proof.
\begin{thm}\label{thm:schramm_bound}[Density bound--Schramm] For any locally finite graph, any starting vertex $o$,
any cyclic order of neighbours and any initial rotor positions,
\begin{equation*}\label{eq:density_bound_schramm}
\limsup _{n\to\infty}\dfrac{E_n}{n}\leq \mathcal{E},
\end{equation*}
where $\mathcal{E}$ denotes the probability that a \emph{simple random walk}
on $\H$ started at the origin $o$ never returns to $o$.
\end{thm}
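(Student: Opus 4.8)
The plan is to compare the transfinite rotor-router process with simple random walk through the hitting-probability harmonic function, first on finite truncations of $\G$ and then by letting the truncation grow. The only genuinely rotor-theoretic input will be that the rotor mechanism distributes exits from a vertex among its neighbours in round-robin order; everything else is a two-way counting identity and a limiting comparison.

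First I would fix a radius $R$ and form the finite graph $\G_R$ obtained from the ball $\ball{R}$ around $o$ by collapsing the sphere $\shell{R}$ to a single absorbing sink $s$. On $\G_R$ let $u_R$ be the function with $u_R(o)=1$, $u_R(s)=0$ and $\tfrac{1}{d_x}\sum_{y\sim x}u_R(y)=u_R(x)$ at every interior vertex $x$; probabilistically $u_R(x)$ is the probability that simple random walk from $x$ reaches $o$ before $s$, and $\tfrac{1}{d_o}\sum_{y\sim o}u_R(y)=1-\mathcal{E}_R$, where $\mathcal{E}_R$ is the probability that the walk from $o$ reaches $s$ before returning to $o$. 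Run the first $n$ transfinite rotor-router walks inside $\G_R$, each stopped on returning to $o$ or on reaching $s$, and write $E_n^{(R)}$ for the number absorbed at $s$. For vertices $x,y$ let $N_n(x)$ be the number of exits of particles from $x$ and $N_n(x,y)$ the number of crossings of the oriented edge $(x,y)$. Round-robin incrementation of rotors gives the deterministic bound $\bigl|N_n(x,y)-N_n(x)/d_x\bigr|<1$ for every edge, uniformly in $n$.

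The heart of the argument is to evaluate $\Psi_n:=\sum_{x}\sum_{y\sim x}N_n(x,y)\,u_R(y)$ in two ways. Grouping by the destination $y$ and using that entries equal exits at every interior vertex, that $o$ is exited $n$ times and entered $n-E_n^{(R)}$ times, and that $u_R(s)=0$, yields
\[
\Psi_n=\sum_{y\ \mathrm{int.}}u_R(y)\,N_n(y)+\bigl(n-E_n^{(R)}\bigr).
\]
Grouping instead by the source $x$, replacing $N_n(x,y)$ by $N_n(x)/d_x$ at the cost of a total error bounded by $C_R:=\sum_{x}\sum_{y\sim x}u_R(y)$ (a constant independent of $n$), and invoking harmonicity of $u_R$ at interior vertices together with $\tfrac{1}{d_o}\sum_{y\sim o}u_R(y)=1-\mathcal{E}_R$, yields $\Psi_n=\sum_{x\ \mathrm{int.}}u_R(x)\,N_n(x)+n(1-\mathcal{E}_R)+\mathrm{Err}$ with $|\mathrm{Err}|\le C_R$. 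The interior sums cancel, leaving $E_n^{(R)}=n\,\mathcal{E}_R-\mathrm{Err}$, and since $|\mathrm{Err}|\le C_R$ is bounded in $n$ we obtain $\lim_{n\to\infty}E_n^{(R)}/n=\mathcal{E}_R$.

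It then remains to let $R\to\infty$. One checks that $\mathcal{E}_R\downarrow\mathcal{E}$, since $\{\text{reach }\shell{R}\text{ before returning to }o\}$ is a decreasing sequence of events whose intersection is $\{\text{never return to }o\}$. The conclusion $\limsup_n E_n/n\le\mathcal{E}$ follows once one establishes $E_n\le E_n^{(R)}$ for each fixed $R$, i.e.\ that a particle escaping to infinity in $\G$ is, in the truncated dynamics, accounted for among those absorbed at $s$. I expect this monotone comparison across scales, rather than the counting identity, to be the main obstacle: the rotor configurations of the infinite and the wired processes diverge as soon as particles begin to cross $\shell{R}$, so a step-by-step coupling fails, and one must instead exploit the abelian property of rotor-router walks to reorganise the excursions and dominate the infinite escapes by the finite ones.
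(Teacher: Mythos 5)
The paper does not prove this theorem at all: it is quoted verbatim from Holroyd and Propp and the text points to \cite[Theorem 10]{holroyd_propp} for the proof, so there is no internal argument to compare against. Judged on its own, the first half of your proposal is correct and complete: the double evaluation of $\Psi_n=\sum_x\sum_{y\sim x}N_n(x,y)u_R(y)$, using conservation of particles at interior vertices, harmonicity of $u_R$, and the round-robin bound $\lvert N_n(x,y)-N_n(x)/d_x\rvert<1$, does give $\lvert E_n^{(R)}-n\,\mathcal{E}_R\rvert\leq C_R$ with $C_R$ independent of $n$, hence $E_n^{(R)}/n\to\mathcal{E}_R$; and $\mathcal{E}_R\downarrow\mathcal{E}$ is fine since a walk that never returns to $o$ almost surely leaves every ball. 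This is exactly the harmonic-function comparison that drives the cited proof.

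The genuine gap is the step you yourself flag and then leave unproved: $E_n\leq E_n^{(R)}$. This is not a routine verification. A particle that escapes to infinity in $\G$ certainly crosses $\shell{R}$ before returning to $o$, but that only bounds $E_n$ by the number of sphere-crossing particles \emph{in the infinite dynamics}, whereas $E_n^{(R)}$ counts absorptions in the \emph{truncated} dynamics; the two systems have different rotor configurations inside $\ball{R}$ as soon as a particle crosses $\shell{R}$ and re-enters, so the inequality does not follow from trajectory comparison. What is needed is the abelian reorganisation you allude to: route each particle first until it hits $\shell{R}\cup\{o\}$, and show that continuing the particles stopped on $\shell{R}$ can only convert sink-absorptions into returns, never the reverse, and that this reordering leaves the count of escapes unchanged. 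Making this precise for the \emph{transfinite} walk (where particles are released sequentially and rotors are not reset) is precisely the content of Lemmas 16--19 of Holroyd and Propp --- the same lemmas this paper invokes in the proof of Corollary \ref{cor:escapes_decay} when it passes from $\T_i$ to the truncated tree $\T_i^H$. Without a proof of that monotonicity statement your argument establishes only the limit $E_n^{(R)}/n\to\mathcal{E}_R$ for each fixed $R$, not the asserted bound on $\limsup_n E_n/n$. Everything else in the proposal stands.
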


\section{Recurrence and Transience of Rotor-Router Walks}\label{sec:rec_trans_rrwalks}

We study now the behaviour of transfinite rotor-router walks on directed covers of graphs for
random initial rotor configurations. In particular, we generalize a theorem of
\textsc{Holroyd and Angel} \cite[Theorem 6]{angel_holroyd_2011} which proves a transition
between recurrent and transient phases for transfinite rotor-router walks on homogeneous 
trees $\mathbb{T}_b$ of degree $b$, with random initial rotor configuration $(\rho(v))_{v\in\mathbb{T}_b}$.
The random variables $\rho(v)$ are i.i.d on $\{0,1,\ldots,b\}$. 

For the tree $\T$ with root $r$, the quantity $E_n(\T,\rho)$ represents the number of particles that
escape to infinity after $n$ rotor-router walks are run from $r$ with initial rotor
configuration $\rho$, and $\mathcal{E}(\T)$ represents the probability that a simple random walk on
$\T$ started at $r$, never returns to $r$. The theorem then states the following.
\begin{thm}[Angel and Holroyd]
\label{thm:b_ary}
For a random i.i.d. initial rotor configuration $\rho$ on the homogeneous tree $\mathbb{T}_b$, writing
$v$ for an arbitrary vertex, we have almost surely
\begin{enumerate}[(i)]
 \item $\displaystyle\lim_{n\to\infty} \dfrac{E_n(\mathbb{T}_b,\rho)}{n} = \mathcal{E}(\mathbb{T}_b)$, if $\E\big[\rho(v)\big] < b-1$;
       \hfill(Transience)
 \item $E_n(\mathbb{T}_b,\rho) = 0$ for all $n\geq 0$, if $\E\big[\rho(v)\big] \geq b-1$.
       \hfill(Recurrence)
\end{enumerate}
\end{thm}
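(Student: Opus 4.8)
The plan is to reduce the dichotomy to the survival versus extinction of a Galton–Watson process that encodes the subtree on which particles can reach infinity, exactly as announced in the introduction. I would first analyse the trajectory of a single particle that enters for the first time the subtree $\T_v$ rooted at a vertex $v$. Recall the enumeration of neighbours: $v^{(0)}$ is the ancestor and $v^{(1)},\dots,v^{(b)}$ are the children, and the rotor at $v$ is incremented \emph{before} each move. Hence, writing $r=\rho(v)$, the particle is sent in turn to the children $v^{(r+1)},v^{(r+2)},\dots,v^{(b)}$, returning to $v$ between consecutive attempts, until either one of these children absorbs it to infinity (an escape) or the rotor wraps around to the ancestor $v^{(0)}$ (a return). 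Consequently the first particle escapes from $\T_v$ if and only if the first particle escapes from $\T_{v^{(j)}}$ for some $j\in\{r+1,\dots,b\}$, and the number of children that are tried is exactly $b-r$.

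Since the rotors $\big(\rho(v)\big)_v$ are i.i.d.\ and the tree is self-similar, this is precisely the survival recursion of a Galton–Watson process: the random subtree $\mathcal{S}\subseteq\mathbb{T}_b$ swept out by the escaping particle is a Galton–Watson tree in which a vertex carrying rotor value $r$ has as offspring the $b-r$ children $v^{(r+1)},\dots,v^{(b)}$. Its offspring law is that of $b-\rho(v)$, with mean $\E[b-\rho(v)]=b-\E[\rho(v)]$. A particle reaches infinity through $v$ if and only if $\mathcal{S}$ rooted at $v$ is infinite, so by the standard extinction dichotomy escape has positive probability precisely when $b-\E[\rho(v)]>1$, that is $\E[\rho(v)]<b-1$, which is exactly the stated threshold.

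In the transient regime $\E[\rho(v)]<b-1$ the process $\mathcal{S}$ is supercritical and survives with positive probability; since escape from $\T_v$ is equivalent to escape from one of its children (each of which is fed infinitely often), a fixed-point $0$–$1$ argument upgrades ``positive probability'' to ``almost surely''. It remains to identify the rate. The bound $\limsup_n E_n/n\le\mathcal{E}(\mathbb{T}_b)$ is Schramm's Theorem~\ref{thm:schramm_bound}, so the work is the matching lower bound, for which I would pass to the finite wired trees $\wiredT$: a particle launched at $r$ is absorbed either at $s_\down$ (a return) or at $s^\up$ (an escape), I would count how many of $n$ particles reach $s^\up$, and then let $h\to\infty$. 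The decisive point is that as the rotors fill up, the rotor–router mechanism equidistributes outgoing particles over the $b+1$ neighbours of each vertex and thus reproduces in the aggregate the transition probabilities of simple random walk, so the escaping proportion converges to the random-walk escape probability and hence to $\mathcal{E}(\mathbb{T}_b)$. In the recurrent regime $\E[\rho(v)]\ge b-1$ the process $\mathcal{S}$ is (sub)critical and finite almost surely, so no particle escapes; promoting ``the first particle returns'' to $E_n(\mathbb{T}_b,\rho)=0$ for all $n$ then requires a first-moment estimate showing that the successive excursions keep the governing branching structure subcritical, so that the tail event ``some particle ever escapes'' has probability zero.

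The main obstacle is the lower bound on the escape rate in the transient phase: one has to interchange the limits $n\to\infty$ and $h\to\infty$ and show that a transient rotor walk is \emph{as efficient as} simple random walk, so that even the branches on which the first-passage tree $\mathcal{S}$ has died out eventually contribute their full share of escaping particles through later walks, and the Schramm bound is attained rather than some smaller positive rate. Making this equidistribution quantitative, and coupling it with the branching-process description on the surviving subtree, is the delicate step; it is here that the i.i.d.\ structure of $\rho$ and the exact self-similarity of $\mathbb{T}_b$ are used in full, and it is the part that the general directed-cover statement will have to replace by the corresponding multitype estimate.
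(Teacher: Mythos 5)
Your reduction of the dichotomy to a Galton--Watson process is exactly the mechanism used here (in the general directed-cover setting it becomes the multitype process \eqref{eq:mbp}): a vertex $v$ with rotor value $\rho(v)$ has $b-\rho(v)$ good children, the offspring mean is $b-\E[\rho(v)]$, and $\E[\rho(v)]<b-1$ is precisely the supercriticality condition. The recurrence half, however, should not be finished the way you suggest. You propose a ``first-moment estimate showing that the successive excursions keep the governing branching structure subcritical''; this is awkward because after the first walk the rotor configuration is no longer i.i.d., so the branching description of the evolved configuration is not available. The paper instead invokes Proposition \ref{prop:live_ends}: the \emph{total} number of escapes $E_\infty(\T,\rho)$ over all time equals the number of live ends of the \emph{initial} configuration $\rho$ --- a deterministic statement about the rotor mechanism. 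Almost sure extinction of the (sub)critical process then kills all live ends at once and yields $E_n=0$ for every $n$, with no analysis of how the configuration evolves.

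The genuine gap is the lower bound in the transient phase. Your sketch --- that the rotor mechanism ``equidistributes outgoing particles over the $b+1$ neighbours and reproduces in the aggregate the transition probabilities of simple random walk'' --- is not the argument, and you rightly flag it as the unproved main obstacle. The actual route has two stages. First one proves only a crude linear bound: combining survival of the branching process, independently at each vertex of level $h$, with Lemma \ref{lem:part_root} (by the abelian property, $(b+1)^h$ particles suffice to place at least one particle at every vertex of level $h$ of the wired tree), one gets $\P[E_n<\delta n]\le Ce^{-cn}$ for some small $\delta>0$ (Corollary \ref{cor:escapes_decay}), hence $\liminf_n E_n/n\ge\delta>0$ almost surely by Borel--Cantelli. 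Second --- and this is the idea missing from your proposal --- one bootstraps this crude $\delta$ up to the exact constant via the deterministic recursion \eqref{eq:li_iteration} over the principal branches: if each principal branch has asymptotic escape rate at least $a$, the root has rate at least $1-1/(1+ba)$, and iterating this map drives any positive starting value to its fixed point, which is exactly the fixed point \eqref{eq:hitpb_iteration} characterizing $\mathcal{E}(\mathbb{T}_b)$. Together with Schramm's upper bound (Theorem \ref{thm:schramm_bound}) this pins the limit at $\mathcal{E}(\mathbb{T}_b)$. Without this bootstrapping step your argument can at best deliver \emph{some} positive escape rate, not the stated value.
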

The discontinuous phase transition above is related to a branching process. 
The main idea of the proof is to model the connected subtree of vertices,
along which particles may move to infinity, as a Galton-Watson tree. In the case of directed covers,
since we have vertices of different types, we need to model a multitype branching process (MBP).

\paragraph{Multitype Branching Processes.}
A multitype branching process (MBP) is a generalization of a Galton-Watson process,
where  one allows a finite number of distinguishable types of particles with different
probabilistic behaviour. The particle types will coincide with the different types of vertices
in the direct covers under consideration, and will be denoted by $\{1,\ldots,m\}$.

A \emph{multitype branching process} is a Markov process $(\boldsymbol{Z}_n)_{n\in\N_0}$
such that the states $\boldsymbol{Z}_n = (Z^1_n,\ldots,Z^m_n)$ are $m$-dimensional vectors with non-negative components. The initial state
$\boldsymbol{Z}_0$ is nonrandom. The $i$-th entry $Z_n^i$ of $\boldsymbol{Z}_n$ represents the number of particles of type $i$ in the 
$n$-th generation. The transition law of the process is as follows. If $\boldsymbol{Z}_0=\boldsymbol{e}_i$,
where $\boldsymbol{e}_i$ is the $m$-dimensional vector whose $i-th$ component is $1$ and all the
others are $0$, then $\boldsymbol{Z}_1$ will have the generating function
$\boldsymbol{f}(\boldsymbol{z}) = \big(f^1(\boldsymbol{z}),\ldots,f^m(\boldsymbol{z})\big)$ with
\begin{equation}
\label{eq:MBP_generating_function}
 f^i(\boldsymbol{z}) = f^i(z_1,\ldots,z_m)
 = \smashoperator{\sum_{s_1,\ldots,s_m \geq 0}} p^i(s_1,\ldots,s_m)z_1^{s_1}\cdots z_m^{s_m},
\end{equation}
and $ 0\leq z_1,\ldots,z_m \leq 1$, where $p^i(s_1,\ldots,s_m)$ is the probability that a 
particle of type $i$ has $s_j$ children
of type $j$, for $j=1,\ldots,m$. For $\boldsymbol{i}=(i_1,\ldots,i_m)$
and $\boldsymbol{j}=(j_1,\ldots,j_m)$, the one-step transition probabilities are given by
\begin{equation*}
\boldsymbol{p}(\boldsymbol{i},\boldsymbol{j})
   =\P[\boldsymbol{Z}_{n+1}=\boldsymbol{j}|\boldsymbol{Z}_n=\boldsymbol{i}] 
   = \text{coefficient of } \boldsymbol{z}^{\boldsymbol{j}}
      \text{ in } \big(\boldsymbol{f}(\boldsymbol{z})\big)^{\boldsymbol{i}}
    := \prod_{k\in\G} f^k(\boldsymbol{z})^{i_k}. 
\end{equation*}
For vectors $\boldsymbol{z},\boldsymbol{s}$, we write $\boldsymbol{z}^{\boldsymbol{s}}=(z_1^{s_1},\ldots,z_m^{s_m})$.
Let $M=(m_{ij})$ be the matrix of the first moments:
\begin{equation}
\label{eq:MBP_first_moment}
 m_{ij}=\E[Z_1^j|\boldsymbol{Z}_0=\boldsymbol{e}_i] 
       = \left.\frac{\partial f^i(z_1,\ldots,z_m)}{\partial z_j}\right|_{\boldsymbol{z} = \boldsymbol{1}}
\end{equation}
represents the expected number of offsprings of type $j$ of a particle of type $i$ in one generation.
If there exists an $n$ such that $m_{ij}^{(n)}>0$ for all $i,j$, then $M$
is called {\em strictly positive} and the process $\boldsymbol{Z}_n$ is called
{\em positive regular}. If each particle has exactly one child, then $\boldsymbol{Z}_n$
is called {\em singular}. The following is well known; see {\sc Harris} \cite{harris}.
\begin{thm}\label{thm:surv/extinc}
 Assume $\boldsymbol{Z}_n$ is positive regular and nonsingular, and let $\r(M)$ be the spectral radius
 of $M$. If $\r(M)\leq 1$, then the process $\boldsymbol{Z}_n$ dies with probability one.
 If $\r(M)>1$, then $\boldsymbol{Z}_n$ survives with positive probability.
\end{thm}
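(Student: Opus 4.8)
The plan is to realize the extinction probability as the minimal fixed point of the offspring generating map $\boldsymbol{f}$ and to locate it by the Perron--Frobenius theory of the mean matrix $M$. Writing $q_i$ for the probability that the process started from $\boldsymbol{Z}_0=\boldsymbol{e}_i$ dies out, one notes that the $i$-th coordinate of the iterate $\boldsymbol{f}^{(n)}(\boldsymbol{0})$ equals $\P[\boldsymbol{Z}_n=\boldsymbol{0}\mid\boldsymbol{Z}_0=\boldsymbol{e}_i]$ and increases to $q_i$; hence $\boldsymbol{q}=(q_1,\ldots,q_m)$ is the smallest solution of $\boldsymbol{s}=\boldsymbol{f}(\boldsymbol{s})$ in $[0,1]^m$. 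The theorem thus reduces to showing that $\boldsymbol{q}=\boldsymbol{1}$ when $\r(M)\leq 1$ and $q_i<1$ for every $i$ when $\r(M)>1$. Since the process is positive regular, $M$ is primitive, so Perron--Frobenius applies: $\r(M)$ is a simple eigenvalue strictly dominating all others in modulus, with strictly positive right and left eigenvectors $\boldsymbol{v}$ and $\boldsymbol{u}$, normalised so that $\boldsymbol{u}\cdot\boldsymbol{v}=1$. Throughout I use that each $f^i$ is monotone and convex on $[0,1]^m$, that $\boldsymbol{f}(\boldsymbol{1})=\boldsymbol{1}$, and that the Jacobian of $\boldsymbol{f}$ at $\boldsymbol{1}$ is exactly $M$ by \eqref{eq:MBP_first_moment}.

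For the \emph{supercritical} case $\r(M)>1$ I would expand $\boldsymbol{f}$ near $\boldsymbol{1}$ in the Perron direction. For small $t>0$ one has $\boldsymbol{f}(\boldsymbol{1}-t\boldsymbol{v})=\boldsymbol{1}-tM\boldsymbol{v}+o(t)=\boldsymbol{1}-t\,\r(M)\boldsymbol{v}+o(t)$, which lies strictly below $\boldsymbol{1}-t\boldsymbol{v}$ coordinatewise once $t$ is small enough, because $\r(M)>1$ and $\boldsymbol{v}>0$. Monotonicity and continuity of $\boldsymbol{f}$ then force the iterates $\boldsymbol{f}^{(n)}(\boldsymbol{1}-t\boldsymbol{v})$ to decrease to a fixed point bounded above by $\boldsymbol{1}-t\boldsymbol{v}<\boldsymbol{1}$. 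As $\boldsymbol{q}$ is the minimal fixed point, $\boldsymbol{q}\leq\boldsymbol{1}-t\boldsymbol{v}<\boldsymbol{1}$, so the process survives with positive probability.

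For the \emph{subcritical} case I would exploit the tangent inequality for convex functions: since $f^i(\boldsymbol{1})=1$, one gets $1-f^i(\boldsymbol{s})\leq\sum_j m_{ij}(1-s_j)$, i.e. $\boldsymbol{1}-\boldsymbol{f}(\boldsymbol{s})\leq M(\boldsymbol{1}-\boldsymbol{s})$ coordinatewise. Iterating from $\boldsymbol{0}$ with $\boldsymbol{a}_n=\boldsymbol{1}-\boldsymbol{f}^{(n)}(\boldsymbol{0})$ yields $\boldsymbol{a}_n\leq M^n\boldsymbol{1}$, hence $\boldsymbol{u}\cdot\boldsymbol{a}_n\leq\r(M)^n\,\boldsymbol{u}\cdot\boldsymbol{1}$. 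When $\r(M)<1$ the right-hand side tends to $0$, and since $\boldsymbol{u}>0$ and $\boldsymbol{a}_n\geq\boldsymbol{0}$ this forces $\boldsymbol{a}_n\to\boldsymbol{0}$, that is $\boldsymbol{q}=\boldsymbol{1}$, so the process dies out almost surely.

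The \emph{critical} case $\r(M)=1$ is the main obstacle, since the estimate above only gives boundedness of $\boldsymbol{u}\cdot\boldsymbol{a}_n$. Here I would argue by contradiction using the strict convexity supplied by nonsingularity. Suppose $\boldsymbol{q}\neq\boldsymbol{1}$ and set $\boldsymbol{b}=\boldsymbol{1}-\boldsymbol{q}\geq\boldsymbol{0}$, $\boldsymbol{b}\neq\boldsymbol{0}$; positive regularity upgrades this to $\boldsymbol{b}>\boldsymbol{0}$ in all coordinates, because a type-$i$ particle may have descendants of every type. The fixed-point relation $\boldsymbol{q}=\boldsymbol{f}(\boldsymbol{q})$ together with the tangent inequality gives $\boldsymbol{b}\leq M\boldsymbol{b}$, and nonsingularity guarantees that at least one $f^i$ is strictly convex along the segment from $\boldsymbol{q}$ to $\boldsymbol{1}$, so the inequality is strict in a coordinate $i$ with $u_i>0$. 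Projecting onto $\boldsymbol{u}$ then yields $\boldsymbol{u}\cdot\boldsymbol{b}<\boldsymbol{u}\cdot M\boldsymbol{b}=(\boldsymbol{u}M)\cdot\boldsymbol{b}=\r(M)\,\boldsymbol{u}\cdot\boldsymbol{b}=\boldsymbol{u}\cdot\boldsymbol{b}$, a contradiction; hence $\boldsymbol{q}=\boldsymbol{1}$. I expect the delicate points to be precisely the conversion of nonsingularity into a strict convexity statement that survives the projection onto the Perron eigenvector, and the use of primitivity to ensure the strictness sits in a coordinate where $\boldsymbol{u}$ is positive; the remaining bookkeeping follows the classical treatment of {\sc Harris} \cite{harris}.
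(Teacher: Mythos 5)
The paper offers no proof of this statement: it is quoted as a classical result with a pointer to {\sc Harris} \cite{harris}. Your argument is the standard one from that literature (extinction vector as the minimal fixed point of $\boldsymbol{f}$, located via Perron--Frobenius for the primitive matrix $M$), and the supercritical and strictly subcritical cases are handled correctly: the first-order expansion along the Perron direction and the tangent inequality $\boldsymbol{1}-\boldsymbol{f}(\boldsymbol{s})\leq M(\boldsymbol{1}-\boldsymbol{s})$ both go through as you describe.

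The one step that does not hold as literally stated is in the critical case: ``nonsingularity guarantees that at least one $f^i$ is strictly convex along the segment from $\boldsymbol{q}$ to $\boldsymbol{1}$.'' That is false in general. If every type produces at most one child but some type produces zero children with positive probability, the process is nonsingular yet every $f^i$ is affine, and no strict inequality is available. What rescues the argument is that this degenerate situation is incompatible with $\r(M)=1$: when all $f^i$ are affine, the row sums of $M$ are $1-p^i(\boldsymbol{0})\leq 1$ with at least one strict inequality (by nonsingularity), and for an irreducible nonnegative matrix this forces $\r(M)<1$ (the spectral radius lies strictly between the minimal and maximal row sums unless they all coincide). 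Hence under $\r(M)=1$, positive regularity and nonsingularity, some type must admit two or more offspring with positive probability, and since $\boldsymbol{b}=\boldsymbol{1}-\boldsymbol{q}$ is strictly positive in every coordinate, the corresponding $f^i$ is genuinely strictly convex along the segment, giving the strict inequality you need before projecting onto $\boldsymbol{u}>\boldsymbol{0}$. You correctly flagged this as the delicate point; with the row-sum observation inserted, the proof is complete.
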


\subsection{Nondeterministic Rotor Configurations on Directed Covers}

Recall the setting we are working on: $\G$ is a finite strongly connected graph with vertices labelled by $\{1,\ldots,m\}$;
for $i\in\G$, $\T_i$ is the directed cover of $\G$ with root  $r$ of type $i$. 
For a vertex $x\in\T_i$ with $\tau(x)=j\in\G$, we have denoted by $x^{(0)}$ its ancestor, and by
$x^{(1)},\ldots,x^{(d_j)}$  its $d_j$ children. We choose the
cyclic ordering $c(x)$ of the neighbours of $x$ to be $(x^{(0)},x^{(1)},\ldots,x^{(d_j)})$,
and we allow the initial rotor to point at an arbitrary neighbour in this order. We will embed
the tree in the plane in such a way that the rotors turn in counter-clockwise order, when following
this rotor sequence. Recall also that for some rotor configuration $\rho$ on $\T_i$, we write 
$\rho(x)=k$ if the rotor at $x$ points to the neighbour $x^{(k)}$.

Let now $\D=(\D_1,\ldots,\D_m)$ be a vector of probability distributions: for each $i\in\G$,
$\D_i$ is a probability distribution with values in $\{0,\ldots,d_i\}$. Consider 
a \emph{random initial configuration $\rho$} of rotors on $\T_i$, such that $\big(\rho(x)\big)_{x\in\T_i}$
are independent random variables, and $\rho(x)$ has distribution $\D_j$ if the vertex $x$ is of type $j$.
Shortly
\begin{equation}\label{eq:random_cfg}
 \rho(x) \stackrel{d}{\sim} \D_j \quad \Longleftrightarrow \quad \tau(x)=j.
\end{equation}
If \eqref{eq:random_cfg} is satisfied,
we shall say that the rotor configuration $\rho$ is $\D=(\D_1,\ldots,\D_m)$-distributed,
and we write $\rho\stackrel{d}{\sim}\D$.
Performing transfinite rotor-router walks on $\T_i$ with $\D$-distributed initial rotor configuration
$\rho$, we observe a phase transition between the recurrent and transient regimes, similar to
the case of homogeneous trees in Theorem \ref{thm:b_ary}.
For defining the critical point of this phase transition, we need to introduce some additional definitions.

Consider a general tree $\T$ with rotor configuration $\rho$. For a vertex $x\in\T$ define the set of \emph{good
children} as $\big\{x^{(k)}:\: \rho(x) < k \leq d_{\tau(x)}\big\}$. This means that a rotor-router particle starting
at a vertex will first visit all its good children before visiting its ancestor. An infinite sequence of vertices
$\big(x_n\big)_{n\in\N}$ with each vertex being a child of the previous one, is called a \emph{live path} if for every
$n\geq 0$ the vertex $x_{n+1}$ is a good child of $x_n$. An {\em end} of $\T$
is an infinite sequence of vertices $x_1,x_2,\ldots$ each being the ancestor of the next.
An end is called {\em live} if the subsequence $(x_i)_{i\geq j}$ starting at one of its vertices is a live path. 

Denote by $E_{\infty}(\T,\rho)=\lim_{n\to\infty}E_n(\T,\rho)$ the total number of particle escaping
to infinity, when one launches an infinite number of particles. Recall now an useful result for general trees $\T$, 
whose proof can be found in \cite[Proposition 8]{angel_holroyd_2011}.
\begin{prop}
\label{prop:live_ends}
The total number of escapes $E_{\infty}(\T,\rho)$ equals the number of live ends in the initial
rotor configuration $\rho$. 
\end{prop}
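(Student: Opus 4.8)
The plan is to establish a bijection between the live ends of $\T$ and the particles that escape to infinity, by induction on the number of particles launched from the root and by tracking how the rotor configuration evolves. First I would set up the key structural observation: a rotor-router particle launched from a vertex $x$ first services each of its good children (those $x^{(k)}$ with $\rho(x) < k \leq d_{\tau(x)}$) before the rotor at $x$ advances to point back at the ancestor $x^{(0)}$. Thus a single particle, viewed locally, either escapes to infinity along a live path or is eventually sent back toward the root; the collection of vertices through which escaping particles can travel is exactly the subtree of good-child edges, whose infinite rays are the live ends.

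The main step is a careful accounting argument. I would argue that when a particle visits a vertex $x$ whose rotor currently points at $x^{(k)}$, it is dispatched to $x^{(k+1)}$ (modulo degree) and that successive visits cycle the rotor through all neighbours. The crucial claim is that each live end absorbs exactly one escaping particle, and each escaping particle follows (eventually) a live end. To see that an escape produces a live end: a transient trajectory visits each vertex finitely often and goes to infinity, so along its infinite tail the rotor at each visited vertex must, at the moment of the final visit, send the particle to a child rather than to the ancestor; translating the rotor-increment rule into the good-child condition shows this infinite tail is a live path, hence determines a live end. Conversely, I would show that the presence of a live end forces at least one escape, and that once a particle has escaped along the maximal good-child subtree rooted at a given live end, the rotors along that end are left in a state that no longer permits a second escape down the same end — so distinct escapes correspond to distinct live ends.

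The cleanest way to organize this is a monotonicity/stabilization argument in the spirit of abelian rotor-router dynamics: because the walks are stopped at the root and the configuration is never reset between particles, the total number of escapes $E_\infty$ is well-defined and independent of fine ordering choices, and one can analyze the final (limit) rotor configuration. I would show that in the limit configuration every live end has been ``used'' precisely once, by arguing that the first particle to travel deep enough along a given live end escapes along it, while the finitely many rotor advances caused by that escape destroy the live-path property at the initial segment, preventing reuse. Summing over all live ends and matching each to its unique escaping particle yields $E_\infty(\T,\rho) = \#\{\text{live ends}\}$.

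The hard part will be the bookkeeping that guarantees the map from escapes to live ends is a genuine bijection rather than merely a surjection or injection in one direction — in particular, ruling out that two distinct escaping particles share the same live end, and ruling out live ends that never receive a particle. This requires controlling the interaction between particles: a later particle may traverse vertices whose rotors were advanced by earlier particles, so I would need a clean invariant (e.g. tracking, for each vertex on a prospective live path, whether its rotor still points below it after all prior traversals) that survives the transfinite limit. I expect this interaction analysis, rather than the local single-particle rule, to be the principal obstacle.
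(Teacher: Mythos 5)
Your overall strategy --- matching escaping particles bijectively with live ends --- is the right one, and it is in spirit the strategy of the proof the paper relies on (note the paper does not prove this proposition itself; it cites \cite[Proposition 8]{angel_holroyd_2011}). But as written the proposal announces the two substantive claims rather than proving them, and the one concrete argument you do give establishes the wrong thing. You argue that the tail of an escaping trajectory is a live path because at the final visit to each vertex the particle is dispatched to a child; even granting this (it already needs an argument that no rotor on the escape end wraps past the ancestor position, i.e.\ that the particle never steps from a vertex of its escape end back to that vertex's parent --- otherwise the final dispatch can land on a child that is \emph{not} good), it only shows the tail is live for the configuration current at the start of that particle's run, whereas the proposition counts live ends of the \emph{initial} configuration $\rho$. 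Transferring back to $\rho$ is exactly the particle-interaction problem you postpone to the end, so the argument is circular as organized: you cannot identify the escape end of the $n$-th particle with a live end of $\rho$ until you already control how particles $1,\dots,n-1$ changed the rotors.

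The missing ingredient is a one-particle lemma describing how a single walk transforms the \emph{set} of live ends, applied inductively over particles. Concretely: (i) if the particle returns to $r^{(0)}$, then every visited vertex was last exited towards its parent, so afterwards every visited vertex has \emph{all} children good; hence the set of live ends is unchanged and the ``unlocked'' region grows, which is what eventually forces every live end to be reached --- without this, ``the first particle to travel deep enough along a given live end escapes along it'' is unsupported, since nothing yet guarantees that any particle travels deep enough. (ii) If the particle escapes, the rotors along its escape end are left pointing \emph{at} the next vertex of that end, which removes exactly that end from the live set while leaving all other live ends live. This, and not the destruction of ``the initial segment'', is what prevents reuse: your stated mechanism is insufficient on its own because a later particle can drive a rotor through a full turn and make the same child good again, so ruling out a second escape along the same end genuinely requires the bookkeeping that the number of live ends drops by exactly one per escape. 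Finally, the case of infinitely many live ends, where the assertion is $E_{\infty}(\T,\rho)=\infty$, should be handled explicitly. None of these steps is false or unprovable --- together they constitute the cited proof --- but in the proposal they are precisely the parts flagged as ``the principal obstacle'' and left open, so the proposal is a plan rather than a proof.
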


\begin{defn}
\label{defn:good_children}
For $i\in\G$ and $k\in\{0,\ldots,d_i\}$ denote by $\mathfrak{C}_i^j(k)$ the number of good children with type $j$ of
a vertex $x$ with type $i$, if the rotor $\rho(x)$ at $x$ is in position $k$, i.e.,
\begin{equation*}
\mathfrak{C}_i^j(k) = \#\big\{l\in\{k+1,\ldots,d_i\}:\: \chi_i(l) = j\big\}.
\end{equation*}
\end{defn}
We have that
$
 \sum_{j\in\G}\mathfrak{C}_i^j(k)=d_i-k.
$
Using this definition we can now define a MBP which models connected subtrees consisting of only good children.
In this MBP, $p^i(s_1,\ldots,s_m)$ represents the probability that a vertex of type
$i$ has $s_j$ good children of type $j$, with $j=1,\ldots,m$.
Define the generating function of the MBP as in \eqref{eq:MBP_generating_function} and the 
probabilities $p^i$  by
\begin{equation}\label{eq:mbp}
p^i(s_1,\ldots,s_m) = \begin{cases}
                      \D_i(k)\; &\text{if for all $j=1,\ldots,m:\: s_j=\mathfrak{C}_i^j(k)$, and $k\in\{0,\ldots,d_i\}$}, \\
                      0         &\text{otherwise},
                      \end{cases}
\end{equation}
with  $\D_i(k)=\P[\rho(x)=k]$,  for $k\in\{0,\ldots,d_i\}$ and $i\in\G$.
In the following we always make the additional assumption that that this MBP is positive
regular and nonsingular, such that Theorem \eqref{thm:surv/extinc} can be applied. In
particular when the rotors
point to every neighbour with positive probability these two conditions are always satisfied.
Let $M(\D)$ be the first moment matrix --- as defined in \eqref{eq:MBP_first_moment} --- of the MBP
with offspring probabilities given in \eqref{eq:mbp}. We are now ready to state our theorem,
as an extension of \cite[Theorem 6]{angel_holroyd_2011}.

\begin{thm}\label{thm:rec/trans_rr_walks}
Let $\rho$ be an initial random rotor configuration with distribution $\D=(\D_1,\ldots,\D_m)$ on 
the directed cover $\T_i$ with root $r$ of type $i$, of a finite graph $\G$ with $m$ vertices. Let $n$ particles
perform transfinite rotor-router walks on $\T_i$. Then we have almost surely:
\begin{enumerate}
 \item Recurrence: $E_n(\T_i,\rho)=0$, for all $n\geq 0$ and $i\in\G$ if $\r\big(M(\D)\big) \leq 1$;
 \item Transience: $\lim_{n\to\infty}\dfrac{E_n(\T_i,\rho)}{n}=\mathcal{E}_i$ for all $i\in\G$ if $\r\big(M(\D)\big) > 1$.
\end{enumerate}
\end{thm}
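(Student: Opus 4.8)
The plan is to translate the dynamical question about escaping particles into a survival/extinction question for the multitype branching process (MBP) defined in \eqref{eq:mbp}, and then read off the two regimes from Harris' dichotomy (Theorem~\ref{thm:surv/extinc}). The bridge is Proposition~\ref{prop:live_ends}: since $E_\infty(\T_i,\rho)$ equals the number of live ends, and a live end is by definition a ray carrying an eventual live path, i.e.\ an infinite chain of good children, the good-children subtree rooted at any vertex $x$ is exactly a realisation of the MBP started from $\boldsymbol e_{\tau(x)}$. First I would check that this MBP is positive regular and nonsingular: nonsingularity is clear unless every vertex deterministically has a single good child, and positive regularity follows from strong connectivity of $\G$ together with the support of $\D$ (whenever $\D_i(0)>0$ every out-type of $i$ occurs as a good child with positive probability, so $M(\D)$ has the same irreducible support pattern as $D$); degenerate cases are treated separately. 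With this, Theorem~\ref{thm:surv/extinc} applies with critical value $\r(M(\D))=1$.

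For the recurrent regime $\r(M(\D))\le 1$, Harris gives that the MBP started from any type dies out with probability one. The event that a given vertex $x$ is the starting point of a live path is precisely survival of the MBP from $\boldsymbol e_{\tau(x)}$, hence has probability zero. Taking the union over the countably many vertices of $\T_i$, almost surely no vertex starts a live path, so there are no live ends and $E_\infty(\T_i,\rho)=0$. Since $n\mapsto E_n(\T_i,\rho)$ is nondecreasing and nonnegative, this forces $E_n(\T_i,\rho)=0$ for all $n$, which is~(1).

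For the transient regime $\r(M(\D))>1$, the upper bound $\limsup_n E_n(\T_i,\rho)/n\le\mathcal E_i$ is immediate from Schramm's density bound (Theorem~\ref{thm:schramm_bound}). Next I would upgrade Harris' positive survival probability into $E_\infty(\T_i,\rho)=\infty$ almost surely: vertices lying at a common height have disjoint descendant sets, so their good-children subtrees depend on disjoint families of the independent rotors $\big(\rho(x)\big)_{x\in\T_i}$ and survive independently, each with probability at least $q_{\min}=\min_j q_j>0$. Choosing an infinite pairwise non-nested family of such vertices and applying the second Borel--Cantelli lemma yields infinitely many surviving subtrees, hence infinitely many live ends, hence $E_\infty=\infty$ almost surely.

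The remaining lower bound $\liminf_n E_n(\T_i,\rho)/n\ge\mathcal E_i$ is the main obstacle. Here I would truncate at height $h$ and couple the walk on $\T_i$ with the walk on the wired tree $\wiredT_i$, so that the two trajectories agree until the first visit to height $h$; writing $E_n^{(h)}$ for the number of the first $n$ particles reaching the up-sink $s^\up$, every escaping particle reaches height $h$, whence $E_n(\T_i,\rho)\le E_n^{(h)}$. On the finite graph $\wiredT_i$ the rotor-router hitting-frequency theorem of Holroyd and Propp \cite{holroyd_propp} gives $E_n^{(h)}/n\to\mathcal E_i^{(h)}$ almost surely, independently of the initial rotors, where $\mathcal E_i^{(h)}$ is the probability that simple random walk from $r$ reaches $s^\up$ before $s_\down$, and $\mathcal E_i^{(h)}\to\mathcal E_i$ as $h\to\infty$. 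The crux is to show that the per-particle gap between \emph{reaching} height $h$ and genuinely \emph{escaping} is negligible in the double limit, i.e.\ $\lim_{h\to\infty}\limsup_{n\to\infty}\big(E_n^{(h)}-E_n(\T_i,\rho)\big)/n=0$. This is exactly where the infinitude of live ends enters: once the configuration above level $h$ has stabilised, particles arriving at height $h$ are funnelled out along the (infinitely many) live ends rather than falling back, and a renewal/monotonicity argument should control the fallback uniformly in $n$. Making this interchange of the limits $n\to\infty$ and $h\to\infty$ rigorous, and quantifying the fallback through the live-end structure above each level, is the step I expect to require the most care.
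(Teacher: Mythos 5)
Your recurrence argument is correct and is essentially the paper's: the good\--children subtree rooted at a vertex realises the MBP of \eqref{eq:mbp}, subcriticality forces almost sure extinction by Theorem~\ref{thm:surv/extinc}, hence there are no live paths and no live ends, and Proposition~\ref{prop:live_ends} gives $E_n(\T_i,\rho)=0$ for all $n$. (Your aside on positive regularity and nonsingularity is reasonable; note though that your irreducibility check needs $\D_i(0)>0$, which is not guaranteed.)

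The transience part contains a genuine gap, and it sits exactly where you say you expect difficulty. Schramm's bound (Theorem~\ref{thm:schramm_bound}) gives the upper bound, and your second Borel--Cantelli argument for $E_\infty=\infty$ is fine but useless here: $E_n\to\infty$ says nothing about $\liminf_n E_n/n$. For the lower bound you propose to compare $E_n$ with the number $E_n^{(h)}$ of particles reaching level $h$ and to prove $\lim_{h\to\infty}\limsup_{n\to\infty}\bigl(E_n^{(h)}-E_n\bigr)/n=0$; the ``funnelling along live ends'' claim that would control the fallback from level $h$ is precisely the hard content of the theorem, you supply no argument for it, and a priori there is no reason the fallback per particle is small for fixed $h$ (many level-$h$ vertices carry no live end above them). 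The paper avoids this interchange of limits altogether by a two-stage argument. First it proves a crude quantitative bound, Corollary~\ref{cor:escapes_decay}: $\P\bigl[E_n(\T_i,\rho)<\delta_i n\bigr]\le C_i e^{-c_i n}$ for some $\delta_i>0$. This rests on Lemma~\ref{lem:part_root}, which shows that $n=(D_{\mathrm{max}}+1)^h$ particles suffice to place at least one particle at every vertex of level $h+1$; by the abelian property $E_n$ is then at least the number of level-$(h+1)$ vertices at which a live path starts, a sum of independent $\mathrm{Binomial}\bigl(w_{i,j}(h),\mathsf{p}_j\bigr)$ variables, and an exponential Markov bound finishes. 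Borel--Cantelli then gives $l_i:=\liminf_n E_n(\T_i,\rho)/n\ge\delta_i>0$ almost surely. Second, this crude bound is bootstrapped to the exact constant via the recursion over principal branches \eqref{eq:li_iteration}, $l_i\ge 1-1/\bigl(1+\sum_{j}d_{ij}l_j\bigr)$: iterating from $a_i=\delta_i$ pushes the lower bound up to the fixed point of this map, which by \eqref{eq:hitpb_iteration} is $\mathcal{E}_i$, matching Schramm's upper bound. To complete your route you would have to prove the fallback estimate you only conjecture; as written, the key step of the transience proof is missing.
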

The quantity $\mathcal{E}_i$ represents the probability that a simple random walk starting at the root 
$r$ of $\T_i$ never returns to $r$, and $\r\big(M(\D)\big)$ is the spectral radius of $M(\D)$.

\begin{proof}[Proof of Theorem \ref{thm:rec/trans_rr_walks}(a)]
For any fixed $x\in\T_i$ with $\tau(x)=j\in\G$
the set of descendants of $x$ that can be reached via a path of good
vertices forms a multitype branching process with offspring distributions $p^j(s_1,\ldots,s_m)$
defined as in \eqref{eq:mbp}. The survival/extinction of this MBP is controlled by
the matrix of the first moments $M(\D)=(m_{ij})_{i,j\in\G}$.

Since $\r(M(\D))\leq 1$, the extinction probability is $1$ by Proposition \ref{thm:surv/extinc}
and the MBP dies almost surely, hence there are no live paths. Therefore by Proposition \ref{prop:live_ends}
there are no escapes almost surely and $E_n(\T_i,\rho)=0$. This gives the recurrence of the rotor-router
walk with random initial configuration $\rho$ which is $\D=(\D_1,\ldots,\D_m)$-distributed.
\end{proof}

\paragraph{Transience.}
The transience part in Theorem \ref{thm:rec/trans_rr_walks} requires some additional work.
We can assume that the direct cover $\T_i$ is not isomorphic to single infinite path, since
by Theorem \ref{thm:b_ary} we have recurrence in that case for any initial rotor
distribution.

\paragraph{The frontier rotor-router process $F_{\rho}(n)$.}
For a fixed rotor configuration $\rho$ consider the following process which generates a sequence
$F_\rho(n)$ of subsets of vertices of the tree. $F_{\rho}(n)$ is constructed by a rotor-router process consisting
of $n$ rotor-router walks starting at the root $r$, such that each vertex of $F_{\rho}(n)$ contains
exactly one particle.  In the first step put a particle at the root $r$ and set $F_{\rho}(1) = \{r\}$.
Inductively given $F_{\rho}(n)$ and the rotor configuration that was created by the previous step, we
construct the next set $F_{\rho}(n+1)$ using the following rotor-router procedure. Perform rotor-router
walk with a particle starting at the root $r$, until one of the following stopping conditions
occurs:
\begin{enumerate}[(a)]
\item\label{frontier_a} The particle reaches the down sink $s_\down$.
In this case we set $F_{\rho}(n+1) = F_{\rho}(n)$.
\item\label{frontier_b} The particle first reaches a vertex $x$, which has never been visited before.
In this case we set $F_{\rho}(n+1) = F_{\rho}(n) \cup \{x\}$.
\item\label{frontier_c} The particle reaches an element $y\in F_{\rho}(n)$. We delete $y$ from $F_{\rho}(n)$, i.e.
set $F'(n) = F_{\rho}(n)\setminus\{y\}$. At this time there are two particles at $y$, both of which are
restarted until stopping condition \eqref{frontier_a}, \eqref{frontier_b} or \eqref{frontier_c} for
the set $F'(n)$ applies to them. Note that since we are on a tree at least one particle will stop at a child of $y$ after one step, due
to halting condition \eqref{frontier_b}.
\end{enumerate}

We will call the set $F_{\rho}(n)$ the \emph{frontier} of $n$ particles. In the following we give several
properties of the frontier $F_{\rho}(n)$.

\begin{lem} The process generating $F_{\rho}(n)$ is always terminating in a finite number of steps and the set of vertices visited by
the particles during this process is finite.
\end{lem}
\begin{proof}
We prove this by induction. For $n=1$ the statement is obviously true.

Let $V(n)$ be the set of vertices visited while computing $F_{\rho}(n)$.
Assuming $V(n)$ is finite we know that after a finite number of steps the rotor-router walk first exits 
the set $V(n)$ at some vertex $x$. If $x = s_\down$ we are in case \eqref{frontier_a} and $V(n+1) = V(n)$. If
$x\not\in F_{\rho}(n)$ we are in case \eqref{frontier_b} and we have $V(n+1) = V(n) \cup \{x\}$.

Finally, if $x \in F_{\rho}(n)$ two particles are restarted at $x$. If both particles visit children of $x$ in
the first step, the process stops and these two vertices are added to $V(n)$. In the case that one of the
particles visits the ancestor of $x$ in the first step, this particle continues its walk until of the three
stopping conditions occurs. Since the first edge that is traversed in the same direction twice is an edge
emanating from the starting vertex (see \textsc{Angel and Holroyd} \cite[Lemma 8]{angel_holroyd_2012}) the
particle will enter the sink before it will return to $x$. Hence each vertex of $F_{\rho}(n)$ can be visited at
most once during the formation of $F_{\rho}(n+1)$. This means in particular that $V(n)$ is expanded by only a
finite number of vertices.
\end{proof}

\begin{rem}
Note that whenever a previously unexplored vertex is reached, it is immediately added to the set $F_{\rho}(n)$.
Hence $\max\{|x|:x\in V(n)\}= \max\{|x|:x\in F_{\rho}(n)\}$.
\end{rem}

\begin{defn}
For each vertex $x\in \mathcal{T}$ denote by
$\mathcal{C}(x) = \big\{y \in \mathcal{T}:\: y \text{ is a descendant of } x\big\} \cup \{x\}$
the \emph{cone} of $x$.
\end{defn}

\begin{lem}
$\mathcal{C}(x) \not= \mathcal{C}(y)$ for all $x,y\in F_{\rho}(n)$ with $x\not=y$. Let $\rho'$ be the rotor-router configuration at the end of the process generating $F_\rho(n)$. Then for all $x\in F_\rho(n)$
the rotor configuration in the cone of $x$ is unchanged, that is, 
$\rho_{|\mathcal{C}(x)} \equiv \rho'_{|\mathcal{C}(x)}$.
\end{lem}
\begin{proof}
This  follows immediately from the procedure generating the frontier.
\end{proof}
Let
\begin{equation}\label{eq:max_height}
M(n) = \max_{\rho} \max\big\{|x|: x\in F_\rho(n)\big\}. 
\end{equation}
be the maximal height of the frontier $F_{\rho}(n)$. 
We will need an upper bound for $M(n)$. Since whenever stopping condition \eqref{frontier_c} occurs,  the frontier moves
one level upwards at the vertex of $F_{\rho}(n)$ that was hit. We have the trivial upper bound of $M(n) \leq n$.
This bound is tight for general trees as shows the example of a single infinite path, where the frontier
$F_{\rho}(n)$ for $n\geq 2$ consists of a single vertex at distance $n-2$ from the root vertex, with the remaining
$n-1$ particles in the down sink $s_{\down}$.

In the case of directed covers with irreducible cone types which are not isomorphic to a single path, $M(n)$
seems to grow logarithmically in $n$ (see Figure \ref{fig:frontier}). For our purposes, a weak upper bound of the form $M(n) \leq c n$ for a constant $c < 1$ is sufficient.

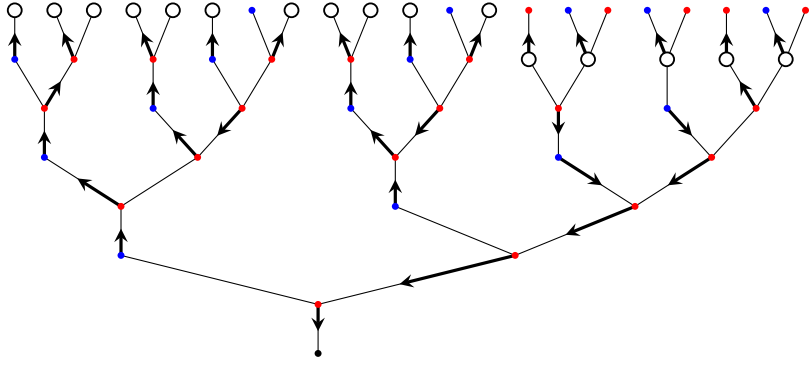
\begin{figure}
\centering
\begin{tikzpicture}[scale=0.65]
\begin{scope}[xscale=0.8]
\coordinate (n0) at (7.671875,0);
\coordinate (n1) at (7.671875,1);
\coordinate (n3) at (2.6875,2);
\coordinate (n35) at (2.6875,3);
\coordinate (n37) at (0.75,4);
\coordinate (n48) at (0.75,5);
\coordinate (n50) at (0.0,6);
\coordinate (n53) at (0,7);
\coordinate (n49) at (1.5,6);
\coordinate (n52) at (1,7);
\coordinate (n51) at (2,7);
\coordinate (n36) at (4.625,4);
\coordinate (n39) at (3.5,5);
\coordinate (n45) at (3.5,6);
\coordinate (n47) at (3,7);
\coordinate (n46) at (4,7);
\coordinate (n38) at (5.75,5);
\coordinate (n41) at (5.0,6);
\coordinate (n44) at (5,7);
\coordinate (n40) at (6.5,6);
\coordinate (n43) at (6,7);
\coordinate (n42) at (7,7);
\coordinate (n2) at (12.65625,2);
\coordinate (n5) at (9.625,3);
\coordinate (n24) at (9.625,4);
\coordinate (n26) at (8.5,5);
\coordinate (n32) at (8.5,6);
\coordinate (n34) at (8,7);
\coordinate (n33) at (9,7);
\coordinate (n25) at (10.75,5);
\coordinate (n28) at (10.0,6);
\coordinate (n31) at (10,7);
\coordinate (n27) at (11.5,6);
\coordinate (n30) at (11,7);
\coordinate (n29) at (12,7);
\coordinate (n4) at (15.6875,3);
\coordinate (n7) at (13.75,4);
\coordinate (n18) at (13.75,5);
\coordinate (n20) at (13.0,6);
\coordinate (n23) at (13,7);
\coordinate (n19) at (14.5,6);
\coordinate (n22) at (14,7);
\coordinate (n21) at (15,7);
\coordinate (n6) at (17.625,4);
\coordinate (n9) at (16.5,5);
\coordinate (n15) at (16.5,6);
\coordinate (n17) at (16,7);
\coordinate (n16) at (17,7);
\coordinate (n8) at (18.75,5);
\coordinate (n11) at (18.0,6);
\coordinate (n14) at (18,7);
\coordinate (n10) at (19.5,6);
\coordinate (n13) at (19,7);
\coordinate (n12) at (20,7);
\end{scope}

\draw (n1) -- (n0);
\draw (n3) -- (n1);
\draw (n35) -- (n3);
\draw (n37) -- (n35);
\draw (n48) -- (n37);
\draw (n50) -- (n48);
\draw (n53) -- (n50);
\draw (n49) -- (n48);
\draw (n52) -- (n49);
\draw (n51) -- (n49);
\draw (n36) -- (n35);
\draw (n39) -- (n36);
\draw (n45) -- (n39);
\draw (n47) -- (n45);
\draw (n46) -- (n45);
\draw (n38) -- (n36);
\draw (n41) -- (n38);
\draw (n44) -- (n41);
\draw (n40) -- (n38);
\draw (n43) -- (n40);
\draw (n42) -- (n40);
\draw (n2) -- (n1);
\draw (n5) -- (n2);
\draw (n24) -- (n5);
\draw (n26) -- (n24);
\draw (n32) -- (n26);
\draw (n34) -- (n32);
\draw (n33) -- (n32);
\draw (n25) -- (n24);
\draw (n28) -- (n25);
\draw (n31) -- (n28);
\draw (n27) -- (n25);
\draw (n30) -- (n27);
\draw (n29) -- (n27);
\draw (n4) -- (n2);
\draw (n7) -- (n4);
\draw (n18) -- (n7);
\draw (n20) -- (n18);
\draw (n23) -- (n20);
\draw (n19) -- (n18);
\draw (n22) -- (n19);
\draw (n21) -- (n19);
\draw (n6) -- (n4);
\draw (n9) -- (n6);
\draw (n15) -- (n9);
\draw (n17) -- (n15);
\draw (n16) -- (n15);
\draw (n8) -- (n6);
\draw (n11) -- (n8);
\draw (n14) -- (n11);
\draw (n10) -- (n8);
\draw (n13) -- (n10);
\draw (n12) -- (n10);

\draw[-stealth, shorten >=1pt, line width = 1.2] (n1) -- ($(n1)!0.6!(n0)$);
\draw[-stealth, shorten >=1pt, line width = 1.2] (n3) -- ($(n3)!0.6!(n35)$);
\draw[-stealth, shorten >=1pt, line width = 1.2] (n35) -- ($(n35)!0.6!(n37)$);
\draw[-stealth, shorten >=1pt, line width = 1.2] (n37) -- ($(n37)!0.6!(n48)$);
\draw[-stealth, shorten >=1pt, line width = 1.2] (n48) -- ($(n48)!0.6!(n49)$);
\draw[-stealth, shorten >=1pt, line width = 1.2] (n50) -- ($(n50)!0.6!(n53)$);
\draw[-stealth, shorten >=1pt, line width = 1.2] (n49) -- ($(n49)!0.6!(n52)$);
\draw[-stealth, shorten >=1pt, line width = 1.2] (n36) -- ($(n36)!0.6!(n39)$);
\draw[-stealth, shorten >=1pt, line width = 1.2] (n39) -- ($(n39)!0.6!(n45)$);
\draw[-stealth, shorten >=1pt, line width = 1.2] (n45) -- ($(n45)!0.6!(n47)$);
\draw[-stealth, shorten >=1pt, line width = 1.2] (n38) -- ($(n38)!0.6!(n36)$);
\draw[-stealth, shorten >=1pt, line width = 1.2] (n41) -- ($(n41)!0.6!(n44)$);
\draw[-stealth, shorten >=1pt, line width = 1.2] (n40) -- ($(n40)!0.6!(n42)$);
\draw[-stealth, shorten >=1pt, line width = 1.2] (n2) -- ($(n2)!0.6!(n1)$);
\draw[-stealth, shorten >=1pt, line width = 1.2] (n5) -- ($(n5)!0.6!(n24)$);
\draw[-stealth, shorten >=1pt, line width = 1.2] (n24) -- ($(n24)!0.6!(n26)$);
\draw[-stealth, shorten >=1pt, line width = 1.2] (n26) -- ($(n26)!0.6!(n32)$);
\draw[-stealth, shorten >=1pt, line width = 1.2] (n32) -- ($(n32)!0.6!(n34)$);
\draw[-stealth, shorten >=1pt, line width = 1.2] (n25) -- ($(n25)!0.6!(n24)$);
\draw[-stealth, shorten >=1pt, line width = 1.2] (n28) -- ($(n28)!0.6!(n31)$);
\draw[-stealth, shorten >=1pt, line width = 1.2] (n27) -- ($(n27)!0.6!(n29)$);
\draw[-stealth, shorten >=1pt, line width = 1.2] (n4) -- ($(n4)!0.6!(n2)$);
\draw[-stealth, shorten >=1pt, line width = 1.2] (n7) -- ($(n7)!0.6!(n4)$);
\draw[-stealth, shorten >=1pt, line width = 1.2] (n18) -- ($(n18)!0.6!(n7)$);
\draw[-stealth, shorten >=1pt, line width = 1.2] (n20) -- ($(n20)!0.6!(n23)$);
\draw[-stealth, shorten >=1pt, line width = 1.2] (n19) -- ($(n19)!0.6!(n22)$);
\draw[-stealth, shorten >=1pt, line width = 1.2] (n6) -- ($(n6)!0.6!(n4)$);
\draw[-stealth, shorten >=1pt, line width = 1.2] (n9) -- ($(n9)!0.6!(n6)$);
\draw[-stealth, shorten >=1pt, line width = 1.2] (n15) -- ($(n15)!0.6!(n17)$);
\draw[-stealth, shorten >=1pt, line width = 1.2] (n8) -- ($(n8)!0.6!(n11)$);
\draw[-stealth, shorten >=1pt, line width = 1.2] (n11) -- ($(n11)!0.6!(n14)$);
\draw[-stealth, shorten >=1pt, line width = 1.2] (n10) -- ($(n10)!0.6!(n13)$);
\fill[black] (n0) circle (2pt);
\fill[red] (n1) circle (2pt);
\fill[blue] (n3) circle (2pt);
\fill[red] (n35) circle (2pt);
\fill[blue] (n37) circle (2pt);
\fill[red] (n48) circle (2pt);
\fill[blue] (n50) circle (2pt);
\fill[white, draw=black, line width = 0.7pt] (n53) circle (4pt);
\fill[red] (n49) circle (2pt);
\fill[white, draw=black, line width = 0.7pt] (n52) circle (4pt);
\fill[white, draw=black, line width = 0.7pt] (n51) circle (4pt);
\fill[red] (n36) circle (2pt);
\fill[blue] (n39) circle (2pt);
\fill[red] (n45) circle (2pt);
\fill[white, draw=black, line width = 0.7pt] (n47) circle (4pt);
\fill[white, draw=black, line width = 0.7pt] (n46) circle (4pt);
\fill[red] (n38) circle (2pt);
\fill[blue] (n41) circle (2pt);
\fill[white, draw=black, line width = 0.7pt] (n44) circle (4pt);
\fill[red] (n40) circle (2pt);
\fill[blue] (n43) circle (2pt);
\fill[white, draw=black, line width = 0.7pt] (n42) circle (4pt);
\fill[red] (n2) circle (2pt);
\fill[blue] (n5) circle (2pt);
\fill[red] (n24) circle (2pt);
\fill[blue] (n26) circle (2pt);
\fill[red] (n32) circle (2pt);
\fill[white, draw=black, line width = 0.7pt] (n34) circle (4pt);
\fill[white, draw=black, line width = 0.7pt] (n33) circle (4pt);
\fill[red] (n25) circle (2pt);
\fill[blue] (n28) circle (2pt);
\fill[white, draw=black, line width = 0.7pt] (n31) circle (4pt);
\fill[red] (n27) circle (2pt);
\fill[blue] (n30) circle (2pt);
\fill[white, draw=black, line width = 0.7pt] (n29) circle (4pt);
\fill[red] (n4) circle (2pt);
\fill[blue] (n7) circle (2pt);
\fill[red] (n18) circle (2pt);
\fill[white, draw=black, line width = 0.7pt] (n20) circle (4pt);
\fill[red] (n23) circle (2pt);
\fill[white, draw=black, line width = 0.7pt] (n19) circle (4pt);
\fill[blue] (n22) circle (2pt);
\fill[red] (n21) circle (2pt);
\fill[red] (n6) circle (2pt);
\fill[blue] (n9) circle (2pt);
\fill[white, draw=black, line width = 0.7pt] (n15) circle (4pt);
\fill[blue] (n17) circle (2pt);
\fill[red] (n16) circle (2pt);
\fill[red] (n8) circle (2pt);
\fill[white, draw=black, line width = 0.7pt] (n11) circle (4pt);
\fill[red] (n14) circle (2pt);
\fill[white, draw=black, line width = 0.7pt] (n10) circle (4pt);
\fill[blue] (n13) circle (2pt);
\fill[red] (n12) circle (2pt);
\end{tikzpicture}
\caption{\label{fig:frontier} $F_\rho(40)$ on the Fibonacci Tree, for $\rho(x) = x^{(\deg(x)-1)}$}
\end{figure}

\begin{lem}\label{lem:mn_bound}
There exists a constant $c < 1$ such that $M(n) < c n$, for all $n$ large enough.
\end{lem}
\begin{proof}
Let $x$ be an element of $F_{\rho}(n)$ with maximal distance $M = |x|$ to the root $r$. Denote by
$p = (r = x_0 , x_1, \ldots, x_M = x)$ the shortest path between $r$ and $x$. Since $F_{\rho}(1) = \{r\}$
and by the iterative construction of $F_{\rho}(n)$, there exist $1 = n_0 < n_1 < \cdots < n_M = n$, such that
$x_i \in F_{\rho}(n_i)$ for all $i\in 0,\ldots,M$. Since $\mathcal{T}$ is a directed cover that is not
isomorph to a single infinite path, it follows that for all $n$ big enough, there exist a constant
$\kappa > 0$, such that $\#\big\{v\in p: \deg(v) \geq 3\big\} \geq \kappa M$.

We want to find a lower bound for $n_{i+2} - n_i$, that is, for the number of steps needed to replace
$x_i$ by $x_{i+2}$ in the frontier. At time $n_i$, the vertex $x_i$ is added to the frontier.
The next time after $n_i$ that a particle visits $x_i$ halting condition \eqref{frontier_c} occurs,
thus the rotor at $x_i$ is incremented two times. As long as not all children of $x_i$
are part of the frontier, every particle can visit $x_i$ at most once, since it either stops immediately
at a child of $x_i$ on stopping condition \eqref{frontier_b} or is returned to the ancestor of $x_i$. Thus
at subsequent visits the rotor at $x_i$ is incremented exactly once. In order for $x_{i+2}$ to be added
to the frontier, the rotor at $x_i$ has to point at direction $x_{i+1}$ twice. Thus replacing $x_i$ with
$x_{i+2}$ in the frontier, needs at least $\deg (x_i)$ particles which visit $x_i$.
Hence, $n_{i+2} - n_i \geq \deg(x_i)$

We have
\begin{equation*}
\sum_{i=0}^{M-2} n_{i+2} - n_i = n_M + n_{M-1} - n_1 - n_0 < 2 n.
\end{equation*}
On the other hand, denote by
\begin{align*}
p_2 &= \#\big\{x_i : i \in \{0,\ldots, M-2\} \text{ s.t. } \deg(x_i) = 2\big\} \\
p_3 &= \#\big\{x_i : i \in \{0,\ldots, M-2\} \text{ s.t. } \deg(x_i) \geq 3\big\},
\end{align*}
then
\begin{align*}
\sum_{i=0}^{M-2} n_{i+2} - n_i & \geq \sum_{i=0}^{M-2} \deg(x_i)
    \geq 3 p_3 + 2 p_2 = 3 p_3 + 2(M-1 - p_3)\\
    &\geq p_3 + 2M - 2 \geq (\kappa + 2) M - 2\kappa-2.
\end{align*}
Thus $M \leq \frac{2}{\kappa+2} n + 2$, which proves the claim.
\end{proof}

\paragraph{The number of particles on the frontier.}
For the frontier process $F_{\rho}(n)$ defined above, when starting $n$ rotor particles at the root, we end up with exactly one particle at each vertex of $F_{\rho}(n)$ and the rest are in  $s_{\down}=r^{(0)}$ (the ancestor of the root). In order to obtain a lower bound for the cardinality of $F_{\rho}(n)$,
we first get an upper bound for the number of particles stopped at $s_{\down}$. This will be achieved using Theorem 1 from \cite{holroyd_propp}. Define
\begin{equation}\label{eq:fill_holes}
 \ell(n)=\{x\in\T_i: |x|=M(n) \text{ and the path from $r$ to $x$ contains no vertex of } F_{\rho}(n)\},
\end{equation}
where $M(n)$ is defined in \eqref{eq:max_height}.
By construction, the set $F_{\rho}(n)$ may have ``holes'': this means that $F_{\rho}(n)$ is not a cut in the tree. By introducing the set $\ell(n)$ in \eqref{eq:fill_holes}, we fill this holes by adding additional vertices on the maximal level $M(n)$. All these additional vertices were not touched by a rotor particle during the formation of $F_{\rho}(n)$. Fix $n$ and a rotor configuration $\rho$, and let 
\begin{equation}\label{eq:frontier_sink}
S=F_{\rho}(n)\cup \ell(n)
\end{equation}
be the sink determined by the frontier process $F_{\rho}(n)$.
Denote be $\T_i^{S}$ the finite tree which is obtained by truncating $\T_i$ at $S$, i.e.
$\T_i^S=\{x\in\T_i:\mathcal{C}(x)\cap S \neq \emptyset\} $.

Let $(X_t)$ be the simple random walk on $\T_i$.
Let $T_{s_{\down}}=\min\{t\geq 0: X_t\in s_{\down}\}$ and $T_S=\{t\geq 0: X_t\in S\}$ be the first hitting time of $s_{\down}$ and $S$ respectively.
Consider now the hitting probability
\begin{equation}\label{eq:hit_pb}
 h(x)=h_{s_{\down}}^{S}(x)=\P_x[T_{s_{\down}}<T_{S}],
\end{equation}
that is, the probability to hit $s_{\down}$ before $S$, when the random walk starts in $x$. 
We have $h\big(s_{\down})=1$ and $h(x)=0$, for all $x\in S$ and $h(x)=0$ for all $x\in\T_i\setminus\T_i^S$.  

Start now $n$ rotor particles at the root $r$, and stop them when they either reach $s_{\down}$ or $S$. By the Abelian property of rotor-router walks (see \cite[Lemma 24]{angel_holroyd_2012}) and by the construction of the frontier process $F_{\rho}(n)$ we will have exactly one rotor particle at each vertex of $F_{\rho}(n)$, no particles at $\ell(n)$, and the rest of the particles
are at $s_{\down}$. In order to estimate the proportion of rotor particles stopped at $s_{\down}$ we use Theorem 1 from \cite{holroyd_propp}, which we state here adapted to our case.
\begin{thm}[Theorem 1, \cite{holroyd_propp}]
Consider the sinks $s_{\down}$ and $S$ as above, and let $(X_t)$ be the simple random walk on $\T_i$.
Let $E$ be the set of edges of $\T_i$ and suppose that the quantity 
\begin{equation}\label{eq:const_k}
K=1+\sum_{(x,y)\in E}|h(x)-h(y)|
\end{equation}
is finite. If we start $n$ rotor particles at the root $r$,
then
\begin{equation}\label{eq:proportion_estimate}
 \Big|h(r)-\frac{n_{s_{\down}}}{n}\Big|\leq \frac{K}{n},
\end{equation}
where $n_{s_{\down}}$ represents the number of particles stopped at $s_{\down}$.
\end{thm}

\begin{lem}\label{lem:k_bound}
 The constant $K$ is equal to
 \begin{equation*}
 K= 1+\big(M(n)+1\big) \big(1-h(r)\big). 
 \end{equation*}
\end{lem}
\begin{proof}
The function $h$ is harmonic away from the sink:
$ h(x)=\frac{1}{\deg(x)}\sum _{y\sim x}h(y)$, if $x \notin s_{\down}\cup S$, and $h(x)=0$ for $x\in(\T_i\setminus \T_i^S)\cup S$. Therefore, there are only finitely many non zero summands in \eqref{eq:const_k}. For a vertex $x\in\T_i^{S}\setminus S$ and its ancestor $x^{(0)}$, we always have $h\big(x^{(0)}\big)\geq h(x)$, and
\begin{equation*}
h\big(x^{(0)}\big)-h(x)=\sum_{i=1}^{\deg(x)-1}\Big(h(x)-h\big(x^{(i)}\big)\Big).
\end{equation*}
Then 
\begin{equation*}
K=1+\Big(h\big(r^{(0)}\big)-h(r)\Big)+\sum_{k=0}^{M(n)-1}\sum_{x\in S^k}\sum_{i=1}^{\deg(x)-1}\Big(h(x)-h\big(x^{(i)}\big)\Big),
\end{equation*}
where $S^k=\{y\in\T_i:|y|=k\}$ represents the $k$-th level of the tree $\T_i$.
For a fixed $k$
\begin{align*}
\sum_{x\in S^k}\sum_{i=1}^{\deg(x)-1}\Big(h(x)-h\big(x^{(i)}\big)\Big) & 
= \sum_{x\in S^k}h\big(x^{(0)}\big)-h(x)=\sum_{x\in S^{k-1}}\sum_{i=1}^{\deg(x)-1}\Big(h(x)-h\big(x^{(i)}\big)\Big)\\
& = \sum_{x\in S^{k-j}}\sum_{i=1}^{\deg(x)-1}\Big(h(x)-h\big(x^{(i)}\big)\Big), \text{ for } j=2,\ldots ,k-1\\
& =h\big(r^{(0)}\big)-h(r)= 1 -h(r).
\end{align*}
Summing up over all levels the claim follows.
\end{proof}
\begin{cor}\label{cor:down_sink}
There is a constant $\kappa\in (0,1)$, such that $\#F_{\rho}(n)>\kappa n$, for all $n$ large enough.
\end{cor}
\begin{proof}
 From \eqref{eq:proportion_estimate}, we have $\frac{n_{s_{\down}}}{n}\leq \frac{K}{n} + h(r)$, where $n_{s_{\down}}$ is the number of particles stopping at $s_{\down}$ in the frontier process $F_{\rho}(n)$.
 Putting together Lemma \ref{lem:mn_bound} and \ref{lem:k_bound}, we obtain $K< 1+(cn+1) (1-h(r))$.
 Putting $\kappa'=h(r)(1-c)+c<1$, we get $n_{s_{\down}}<\kappa' n$. Since $\#F_{\rho}(n)=n-n_{s_{\down}}$,
 the claim follows.
\end{proof}

\begin{cor}\label{cor:escapes_decay}
Let $\rho$ be an initial random rotor configuration with distribution $\D=(\D_1,\ldots,\D_m)$ on 
the directed cover $\T_i$ with root of type $i$, of a finite strongly connected graph $\G$ with $m$ vertices. Suppose
$\r\left(M(\D)\right)>1$. Then there exists $\delta_i,c_i>0$, such that for all $n$
\begin{equation*}
 \P\big[E_n(\T_i,\rho)<\delta_i n \big]\leq e^{-c_in}, \quad \text{ for all }i\in\G.
\end{equation*} 
\end{cor}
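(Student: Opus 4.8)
The plan is to bound $E_n$ below by the growth of the supercritical branching process of good children from \eqref{eq:mbp}, whose first--moment matrix $M(\D)$ has $\r(M(\D))>1$. I would start from a deterministic fact about a single particle: a rotor--router walk on a tree proceeds in depth--first order, and on the first visit to any vertex $x$ the rotor sits in its initial position $\rho(x)$, so the particle is pushed successively through exactly the good children $x^{(\rho(x)+1)},\dots,x^{(d_{\tau(x)})}$ and recurses into them. Hence the first particle ever to reach $x$ escapes to infinity precisely when the subtree of good children rooted at $x$ is infinite. Combining this with Proposition \ref{prop:live_ends}, if every vertex at height $h$ has been visited then $E_n(\T_i,\rho)\ge \eta_h$, where $\eta_h$ counts the height--$h$ vertices whose good--children subtree is infinite. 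Lemma \ref{lem:part_root} makes the hypothesis quantitative: as soon as $n\ge (D_{\mathrm{max}}+1)^h$ every height--$h$ vertex has been hit, so $E_n\ge\eta_h$ on this range of $n$.

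Next I would estimate $\eta_h$. For distinct height--$h$ vertices the events ``the good--children subtree is infinite'' depend on disjoint, hence independent, families of the i.i.d.\ rotors, and each has probability $q_{\tau(x)}$, the survival probability of the branching process issued from one particle of type $\tau(x)$; by Theorem \ref{thm:surv/extinc} and $\r(M(\D))>1$ we have $q_j>0$ for all $j$. With $w_{i,j}(h)$ read off from $W(z)=(I-zD)^{-1}$ and $w_i(h)=\sum_j w_{i,j}(h)$, this gives $\E[\eta_h]=\sum_j w_{i,j}(h)q_j\ge q_{\min}\,w_i(h)$ where $q_{\min}=\min_j q_j>0$; since $M(\D)\le D$ entrywise forces $\r(D)\ge \r(M(\D))>1$, the count $w_i(h)$ grows at rate $\r(D)^h$. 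A Chernoff bound for the independent indicators then yields $\P[\eta_h<\tfrac12\E\eta_h]\le \exp(-c\,w_i(h))$, and hence a lower bound on $E_n$ that fails only with exponentially small probability.

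The step I expect to be the crux is upgrading this to the \emph{linear} bound $E_n\ge\delta_i n$ with rate $e^{-c_i n}$. The one--scale estimate above is not enough on its own: filling height $h$ may cost $(D_{\mathrm{max}}+1)^h$ particles while only $\approx \r(D)^h$ good ends open up, so the choice $h\sim\log n$ delivers merely $E_n\gtrsim n^{\theta}$ with $\theta=\log\r(D)/\log(D_{\mathrm{max}}+1)\le 1$ and a stretched--exponential rate. To reach a genuinely linear escape rate I would use the self--similarity of the directed cover, namely that the subtree rooted at a good height--$h$ vertex $x$ is a copy of $\T_{\tau(x)}$ which, once opened, emits later particles to infinity at its own positive rate. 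The natural route is a renewal/recursion over finitely many scales: the two--sided rotor--router concentration on the \emph{finite} wired tree $\wiredT_i$ (where the rotor discrepancy is bounded independently of the number of particles) shows that a linear number $\approx n\,\beta_i(h)$ of particles reach height $h$ and split, in linear numbers, among the subtrees hanging there; routing these through the subtrees whose survival is guaranteed with exponentially small failure by the Chernoff bound above produces $\Theta(n)$ essentially independent escape events, which is exactly the source of the $e^{-c_i n}$ rate. Controlling the dependence created by particles that leave one subtree, return to the root, and re--enter another, while balancing the rotor discrepancy against the branching growth, is the delicate point, and is where the scheme of \cite[Corollary 23]{angel_holroyd_2011} must be adapted with most care to the multitype setting.
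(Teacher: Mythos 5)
Your first two paragraphs reproduce the paper's argument almost verbatim: restrict to $n=(D_{\mathrm{max}}+1)^h$, use Lemma \ref{lem:part_root} to place at least one particle at every vertex of level $h$, restart one particle at each level-$h$ vertex from which a live path starts (the abelian property justifies the reordering), and apply an exponential-moment bound to the independent survival indicators, which are Bernoulli with the positive survival probabilities $\mathsf{p}_j$ of the supercritical MBP \eqref{eq:mbp}. The obstruction you then raise in your third paragraph is real, and you should know that the paper does not actually resolve it: its final chain of inequalities bounds $\P[E_n<\delta_i n]$ by $(1-p_k+p_ke^{-1})^{w_{i,k}(h)}e^{\delta_i n}$ and then asserts $C_k^{w_{i,k}(h)}\le C_k^{n}$ ``since $w_{i,k}(h)\le n$'', which is backwards because $C_k<1$. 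Since $w_{i,k}(h)\le D_{\mathrm{max}}^h=o\big((D_{\mathrm{max}}+1)^h\big)=o(n)$, the number of level-$h$ vertices is genuinely sublinear in the number of particles used to reach them; the lower bound $E_n\ge\#X_i$ is therefore deterministically smaller than $\delta_i n$ for large $n$, and the one-scale argument delivers only $\P\big[E_n<\delta n^{\theta}\big]\le Ce^{-cn^{\theta}}$ with $\theta=\log\r(D)/\log(D_{\mathrm{max}}+1)<1$ --- exactly your diagnosis.

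This matters downstream: the recursion over principal branches in the proof of Theorem \ref{thm:rec/trans_rr_walks}(b) needs $\liminf_n E_n/n>0$ as a seed (the map $a_i\mapsto 1-1/(1+\sum_j d_{ij}a_j)$ fixes $0$), so a sublinear escape bound cannot be bootstrapped and a genuine repair is required. Two routes suggest themselves: a sharper version of Lemma \ref{lem:part_root} showing that $O(w_i(h))$ particles already suffice to cover level $h$ --- this is what makes the regular-tree case of Angel--Holroyd work, where both counts are $\Theta(b^h)$ --- or the multi-scale renewal over subtrees that you sketch. But your sketch stops at the decisive step: you say yourself that controlling the dependence between subtrees and balancing rotor discrepancy against branching growth ``is the delicate point,'' and no argument is given there. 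So, as written, your proposal does not establish the stated linear bound with exponential rate either. You deserve credit for locating the true difficulty, which the paper's own proof papers over, but the corollary is not proved until that last paragraph is turned into an actual argument.
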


\begin{proof}
Consider $n$ rotor walks particles and build the frontier process $F_{\rho}(n)$.
The MBP with probabilities $p^i$ as in equation \eqref{eq:mbp} and $\r\left(M(\D)\right)>1$ survives
with positive probability $\mathsf{p}_i$. Hence, for each $i\in\G$, with positive probability
there exists a live path starting at the root of $\T_i$. Existence of a live path
implies that the first particle escapes, hence
\begin{equation*}
\P[E_1(\T_i,\rho)=1]=\mathsf{p}_i>0,\quad \text{ for all } i\in\G.
\end{equation*}
Denote now by $X$ the set of vertices $x\in F_{\rho}(n)$, for which there is a live path starting at $x$.
Then $\# X=\sum_{x\in F_{\rho}(n)}Y_x$, where the random variables $Y_x\sim \text{Bernoulli}\big(\mathsf{p}_{\tau(x)}\big)$ are independent Bernoulli random variables.
Recall that $\tau(x)$ represents the type of the vertex $x$.
By the construction of $F_{\rho}(n)$, after starting $n$ rotor walks in the root $r$, we have exactly one rotor particle in each $x\in F_{\rho}(n)$. By Corollary \ref{cor:down_sink}, we have $\#F_{\rho}(n)>\kappa n$. Hence $\E[\#X]\geq \#F_{\rho}(n)\mathsf{p}>\kappa \mathsf{p} n $, where $\mathsf{p}=\min_{i}\mathsf{p}_i>0.$ Let us first prove that
\begin{equation}\label{eq:ldev}
E_n(\T_i,\rho)\geq \# X.
\end{equation}
From \cite[Lemmas 18,19]{holroyd_propp}, it suffices to prove
\eqref{eq:ldev} for the truncated tree $\T_i^H=\{x\in\T_i:|x|\leq H\}$, with $H>M(n)$, i.e.,
\begin{equation}\label{eq:esc_truncated}
 E_n(\T_i^H,S^H,\rho^H)\geq \# X.
\end{equation}
$E_n(\T_i^H,S^H,\rho^H)$ represents the number of particles that stop at $S^H=\{x\in\T_i:|x|=H\}$
when we start $n$ rotor-router walks at the root of
$\T_i$ and rotor configuration $\rho^H$ (the restriction of $\rho$ on $\T_i^H$).
In the tree $\T_i^S$, truncated at the frontier $S$, start $n$ particles at the root, and stop them when they either reach $S$ or return to $s_{\down}$. Moreover, the vertices
at distance greater than $M(n)$ were not reached, and the rotors there are unchanged.
Now for every vertex $x$ in $X$ restart one particle. Since there is a live path a $x$ the particle will
reach the level $H$ without leaving the cone of $x$, at which point the particle is stopped again.
Hence if we restart all particles which are located in $F_{\rho}(n)$ at least $\# X$ of them will reach
level $H$ before returning to the root. Because of the abelian property of rotor-router walks,
\eqref{eq:esc_truncated} follows, therefore also \eqref{eq:ldev}.

Using the Chernoff bound, there exists $\delta_i\in (0,1)$ such that
\begin{align*}
\P\big[E_n(\T_i,\rho) & <\delta_i n \big]  \leq \P\big[\#X< \delta_i n\big]
                     \leq \P\Big[\#X < \frac{\delta_i}{\kappa \mathsf{p}} \E[\#X]\Big]\\
 & \leq \exp\bigg\{-\frac{\big(1-\frac{\delta_i}{\kappa \mathsf{p}}\big)^2}{2}\E[\#X]\bigg\}
 \leq  \exp\bigg\{-\frac{\big(1-\frac{\delta_i}{\kappa \mathsf{p}}\big)^2}{2}\kappa \mathsf{p} n \bigg\}.
\end{align*}
We can then choose $c_i>0$ such that
\begin{equation*}
\P\big[E_n(\T_i,\rho) <\delta_i n \big]\leq  e^{-c_i n},
\end{equation*}
which proves the statement.
\end{proof}

We shall also need \cite[Lemma 25]{angel_holroyd_2011} which holds for general trees.
\begin{lem}
For a graph $G$ with $m$ vertices and $\T_i$ its directed cover with root $r$ of type
$i\in\G$, let $\T_{\chi_i(1)},\ldots,\T_{\chi_i(d_i)}$ be its principal branches rooted
at the children $r^{(k)}$ of the root, with $k=1,\ldots,d_i$.
Let $\rho$ be some rotor configuration on $\T_i$ and $\rho_k$ be its restriction on the tree
$\T_{\chi_i(k)}$. For each $i\in\G$, let
\begin{equation*}
 l_i=\liminf_{n\to\infty}\dfrac{E_n(\T_i,\rho)}{n} \quad \text{ and }\quad 
 l_i^k=\liminf_{n\to\infty}\dfrac{E_n(\T_{\chi_i(k)},\rho_k)}{n}, \quad k=1,\ldots,d_i.
\end{equation*}
Then 
\begin{equation}\label{eq:li_iteration}
 l_i\geq 1-\dfrac{1}{1+\sum_{k=1}^{d_i}l_i^k}=1-\dfrac{1}{1+\sum_{j\in\G}d_{ij}l_j}.
\end{equation}
\end{lem}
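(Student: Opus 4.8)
The plan is to treat the root $r$ as a round-robin distributor of particles among its $d_i+1$ neighbours $r^{(0)},r^{(1)},\dots,r^{(d_i)}$, and to set up a flow balance at $r$ that expresses the escapes from $\T_i$ in terms of escapes from the principal branches. Fix $n$ and run the $n$ transfinite rotor-router walks on $\T_i$ to completion. For $k\in\{0,1,\dots,d_i\}$ let $a_k$ denote the number of times the rotor at $r$ sends a particle along the edge to $r^{(k)}$, i.e. the number of exits from $r$ in direction $k$; here $k=0$ is the sink direction $r^{(0)}$ and $k\ge 1$ is the entrance into the $k$-th principal branch $\T_{\chi_i(k)}$.

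Three facts drive the argument. First, because the rotor at $r$ cycles through $(r^{(0)},\dots,r^{(d_i)})$, the exit counts are balanced: $|a_k-a_l|\le 1$ for all $k,l$, so in particular $a_k\ge a_0-1$ for every $k\ge1$. Second, every visit to $r$ is either one of the $n$ launches or a return from some branch, and every visit is followed by exactly one exit; writing $e_k$ for the number of particles that escape through branch $k$, the returns from branch $k$ number $a_k-e_k$, so balancing entries against exits, $n+\sum_{k\ge1}(a_k-e_k)=a_0+\sum_{k\ge1}a_k$, which collapses to $a_0=n-E_n(\T_i,\rho)$. Third, the particles that ever enter branch $k$ constitute exactly a transfinite rotor-router walk on $\T_{\chi_i(k)}$ with root $r^{(k)}$, sink $r$ and initial configuration $\rho_k$; hence, by the abelian property, the escapes from that branch depend only on the number $a_k$ of entries, namely $e_k=E_{a_k}(\T_{\chi_i(k)},\rho_k)$, and $E_n(\T_i,\rho)=\sum_{k=1}^{d_i}e_k$. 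Since $m\mapsto E_m$ is nondecreasing and $a_k\ge a_0-1=n-E_n(\T_i,\rho)-1$, monotonicity gives the key inequality
\begin{equation*}
E_n(\T_i,\rho)\ \ge\ \sum_{k=1}^{d_i}E_{\,n-E_n(\T_i,\rho)-1}\big(\T_{\chi_i(k)},\rho_k\big).
\end{equation*}

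It remains to divide by $n$ and pass to the limit. If $l_i=1$ the asserted bound $1-\tfrac{1}{1+\sum_k l_i^k}\le 1$ is trivial, so assume $l_i<1$ and pick a subsequence $n_j\to\infty$ with $E_{n_j}(\T_i,\rho)/n_j\to l_i$. Setting $m_j=n_j-E_{n_j}(\T_i,\rho)-1$ we have $m_j\to\infty$ and $m_j/n_j\to 1-l_i>0$. Writing the displayed inequality as $\tfrac{E_{n_j}(\T_i,\rho)}{n_j}\ge\sum_k \tfrac{E_{m_j}(\T_{\chi_i(k)},\rho_k)}{m_j}\cdot\tfrac{m_j}{n_j}$ and using that the liminf along a subsequence dominates the full liminf, so $\liminf_j E_{m_j}(\T_{\chi_i(k)},\rho_k)/m_j\ge l_i^k$, together with $m_j/n_j\to 1-l_i$ and superadditivity of liminf for finite sums, yields $l_i\ge(1-l_i)\sum_{k=1}^{d_i}l_i^k$. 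Rearranging gives $l_i\ge 1-\tfrac{1}{1+\sum_{k=1}^{d_i}l_i^k}$, and grouping the $d_i$ children by type (there are $d_{ij}$ of type $j$, each a branch with escape rate $l_j$) turns $\sum_{k}l_i^k$ into $\sum_{j\in\G}d_{ij}l_j$, which is the second equality.

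The main obstacle is the third fact above: rigorously reducing branch $k$ to an independent transfinite walk. Particles enter branch $k$ interleaved with excursions into the other branches and with returns to $r$, so one must invoke the abelian property of rotor-router walks to argue that neither the number of escapes $e_k$ nor the branch odometer depends on this interleaving, only on the total count $a_k$ of entries; the bookkeeping that every incomplete excursion has terminated once all $n$ transfinite walks are finished must also be checked. By comparison, the round-robin bound and the liminf passage are routine once this decomposition is in place.
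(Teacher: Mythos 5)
Your argument is correct. The paper does not prove this lemma itself --- it quotes it verbatim as Lemma~25 of Angel and Holroyd --- and your proof (the round-robin balance of the root rotor giving $a_k\ge a_0-1=n-E_n(\T_i,\rho)-1$, the exact decomposition $E_n(\T_i,\rho)=\sum_{k}E_{a_k}(\T_{\chi_i(k)},\rho_k)$ coming from the sequential, non-overlapping excursions into the principal branches, then monotonicity of $m\mapsto E_m$ and the liminf passage along a minimizing subsequence) is essentially that original argument, including the two delicate points you correctly flag as needing the abelian/transfinite bookkeeping.
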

The probability $\mathcal{E}_i$ that a simple random walk $(X_t)$
on $\T_i$ never returns to the root satisfies an relation similar to \eqref{eq:li_iteration}. 
If  $r$ is the root of $\T_i$, then
$\mathcal{E}_i=\P_r[X_t\neq s_{\down},\forall t\geq 0]$.
Factorizing the random walk on $\T_i$ with respect to the first step, we get
\begin{equation*}
\big(1-\mathcal{E}_i\big)\Big(d_i+1-\sum_{j=1}^{m}d_{ij}\big(1-\mathcal{E}_j\big)\Big)=1,
\end{equation*}
which gives
\begin{equation}\label{eq:hitpb_iteration}
 \mathcal{E}_i=1-\dfrac{1}{1+\sum_{j\in\G} d_{ij}\mathcal{E}_j}
\end{equation}
We are now able to prove the transience part in  Theorem \ref{thm:rec/trans_rr_walks}.

\begin{proof}[Proof of Theorem \ref{thm:rec/trans_rr_walks}(b)]
For each $i\in\G$, let
\begin{equation*}
 l_i=\liminf_{n\to\infty}\dfrac{E_n(\T_i,\rho)}{n}.
\end{equation*}
Using Borel-Cantelli Lemma for the events in Corollary \ref{cor:escapes_decay},
it follows that for each $i$, there exists $\delta_i$ such that
\begin{equation*}
 \P\Big[\limsup_{n\to\infty}\dfrac{E_n(\T_i,\rho)}{n}<\delta_i\Big]=0.
\end{equation*}
Since
\begin{equation*}
\P\Big[\limsup_{n\to\infty}\dfrac{E_n(\T_i,\rho)}{n}<\delta_i\Big]=1-\P\Big[\liminf_{n\to\infty}\dfrac{E_n(\T_i,\rho)}{n}\geq\delta_i\Big],
\end{equation*}
we have $\P[l_i\geq\delta_i]=1$, with $\delta_i>0$.  Let $a_i$ be some positive constants 
such that $l_i\geq a_i$ for all $i$. Then
\begin{equation*}
l_i\geq 1-\dfrac{1}{1+\sum_{j\in\G} d_{ij}a_j} \text{ a.s.}
\end{equation*}
Applying this  repeatedly gives that for all $i$, $l_i$ 
is greater or equal to the fixed point of the iteration
\begin{equation*}
a_i\mapsto 1-\dfrac{1}{1+\sum_{j\in\G}d_{ij}a_j} 
\end{equation*}
from $\R^m\mapsto\R^m$. The return probabilities $\mathcal{E}_i$ are also solutions of the
same fixed point equation \eqref{eq:hitpb_iteration}. Hence $l_i\geq \mathcal{E}_i$.
On the other hand, by Theorem \ref{thm:schramm_bound} we have
$l_i\leq \mathcal{E}_i$, which implies 
\begin{equation*}
 \lim_{n\to\infty}\dfrac{E_n(\T_i,\rho)}{n}=\mathcal{E}_i, \text{ for all } i\in\G,
\end{equation*}
if $\r(M(\D))>1$, which proves the desired.
\end{proof}

\begin{rem}
For a supercritical positive regular multitype branching process,
in the event of nonextinction, the genealogical tree has branching number 
$\r(M)$. Like above, $M$ represents the matrix of the first moments of the MBP,
and $\r(M)$ its spectral radius.
\end{rem}
This means that in the transient case we have
$\r(M(\D))=\text{br}(\D)$, where $\text{br}(\D)$ is the branching number
of the genealogical tree with offspring distributions as given in \eqref{eq:mbp}.
For more information on the relation between the spectral radius of a branching 
process and the branching number, see {\sc Lyons} \cite{lyons_percolation}.

\subsubsection{Examples}

\begin{exam}
Let us first consider a {\em generalized Fibonacci tree} depending on the parameter $\alpha\in\N$. 
Consider the graph $\G$ with adjacency matrix
\begin{equation*}
 D=
 \begin{pmatrix}
  0 & \alpha \\
  1 & 1
 \end{pmatrix},
\end{equation*}
and $\T_i$ its directed cover with root of type $i=1,2$. Depending on the values 
of $\alpha$, the rotor-router walk on $\T_i$ can be either transient or recurrent.
If $\alpha=1$ we get the Fibonacci tree,  and for $\alpha=2$ we get the binary tree. 
On such trees we take a random initial configuration of rotors which is uniformly distributed on the
neighbours. Since these trees have $2$ types of vertices (first type $1$ with $\alpha$ children
of type $2$, and second type $2$ with $1$ child of type $1$ and one of type $2$), 
we have $\D=(\D_1,\D_2)$, with $\D_1=\text{Uniform}(0,\ldots,\alpha)$
and $\D_2=\text{Uniform}(0,1,2)$. Consider the following generation function $\chi_i$:
\begin{equation*}
 \chi_1(1)  = \ldots =\chi_1(\alpha)=2 \quad \text{ and } \quad
 \chi_2(1)  = 1 \text{ and }\chi_2(2)=2.
\end{equation*}
The transition probabilities $p^i$, $i=1,2$ defined in \eqref{eq:mbp}, which model a MBP consisting of only
good children are then given by:
\begin{equation*}
 p^1(0,0)  =\ldots = p^1(0,\alpha)=\frac{1}{\alpha}
 \quad \text{ and }\quad
 p^2(0,0) = p^2(1,0)=p^2(1,1)=\frac{1}{3}.
\end{equation*}
For the generating functions $f^i(z)$, with $z=(z_1,z_2)$ we then get:
\begin{align*}
f^1(z) &= \frac{1}{\alpha+1}\left(1 + z_2 + z_2^2 + \cdots + z_2^{\alpha}\right) \\
f^2(z) &= \frac{1}{3}(1+z_1+z_1 z_2).
\end{align*}
The behaviour of rotor-router walks on directed covers of graphs is controlled by the matrix $M(\D)$ of the
first moments of the MBP defined above. The entries of $M(\D)$ can be computed using \eqref{eq:MBP_first_moment},
and for this particular example we have
\begin{equation*}
 M(\D)=
\begin{pmatrix}
0 & \alpha/2 \\
2/3 & 1/3
\end{pmatrix}.
\end{equation*}
The spectral radius is $\r(M(\D)) = \frac{1}{6}\left(1+ \sqrt{12\alpha + 1}\right)$.
Therefore, the rotor-router walk is
\begin{itemize}
 \item recurrent for $\alpha\leq 2$
 \item transient for $\alpha>2$.
\end{itemize}
In particular, the rotor-router walk on the Fibonacci tree and on the
binary tree is recurrent. Note here the contrast with the simple random walk which is transient.
\end{exam}
The next example shows a case where different planar embeddings of the same tree, gives rise to changes
in the recurrence/transience of the rotor-walk with random initial rotor configuration.

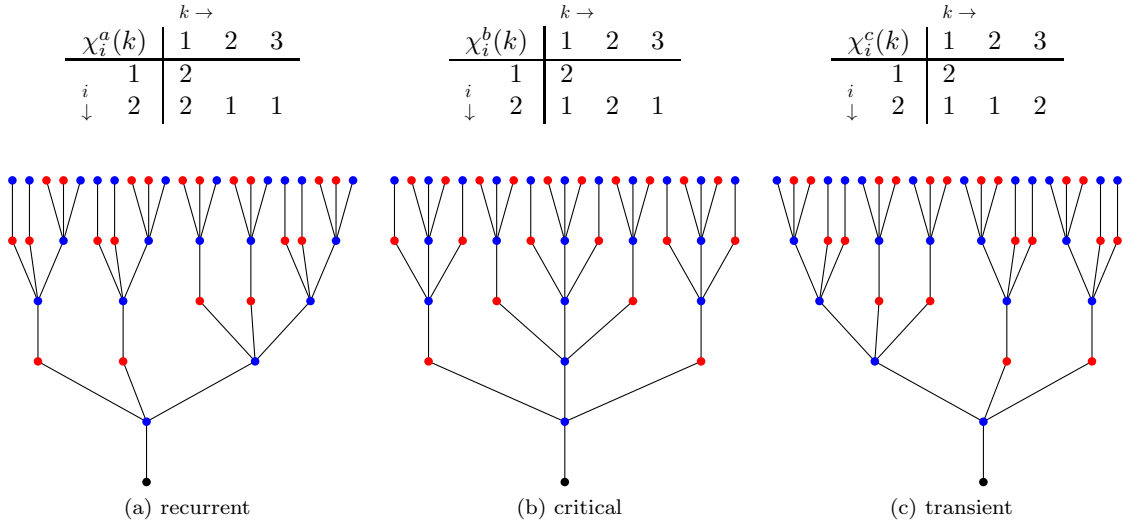
\begin{figure}
\begin{center}
\subfloat[\label{fig:permutation_example:recurrent}recurrent]{\begin{tikzpicture}[scale=0.8]
\begin{scope}[yshift=7cm]
\node (chi) at (2.8,0) {
\begin{tabular}{lc|ccc}
\multicolumn{2}{c}{} & \multicolumn{3}{l}{\scalebox{0.7}{$k\rightarrow$}} \\
\multicolumn{2}{c|}{$\chi^a_i(k)$} & 1 & 2 & 3 \\
\hline
\multirow{2}{*}{\scalebox{0.7}{\rotatebox{-90}{\rotatebox{90}{$i$} $\rightarrow$}}}
  & 1 & 2 & \\
  & 2 & 2 & 1 & 1
\end{tabular}
};
\end{scope}

\begin{scope}[xscale=0.28]
\coordinate (n0) at (7.875,0);
\coordinate (n1) at (7.875,1);
\coordinate (n4) at (1.5,2);
\coordinate (n33) at (1.5,3);
\coordinate (n36) at (0.0,4);
\coordinate (n41) at (0,5);
\coordinate (n35) at (1.0,4);
\coordinate (n40) at (1,5);
\coordinate (n34) at (3.0,4);
\coordinate (n39) at (2,5);
\coordinate (n38) at (3,5);
\coordinate (n37) at (4,5);
\coordinate (n3) at (6.5,2);
\coordinate (n24) at (6.5,3);
\coordinate (n27) at (5.0,4);
\coordinate (n32) at (5,5);
\coordinate (n26) at (6.0,4);
\coordinate (n31) at (6,5);
\coordinate (n25) at (8.0,4);
\coordinate (n30) at (7,5);
\coordinate (n29) at (8,5);
\coordinate (n28) at (9,5);
\coordinate (n2) at (14.25,2);
\coordinate (n7) at (11.0,3);
\coordinate (n20) at (11.0,4);
\coordinate (n23) at (10,5);
\coordinate (n22) at (11,5);
\coordinate (n21) at (12,5);
\coordinate (n6) at (14.0,3);
\coordinate (n16) at (14.0,4);
\coordinate (n19) at (13,5);
\coordinate (n18) at (14,5);
\coordinate (n17) at (15,5);
\coordinate (n5) at (17.5,3);
\coordinate (n10) at (16.0,4);
\coordinate (n15) at (16,5);
\coordinate (n9) at (17.0,4);
\coordinate (n14) at (17,5);
\coordinate (n8) at (19.0,4);
\coordinate (n13) at (18,5);
\coordinate (n12) at (19,5);
\coordinate (n11) at (20,5);
\end{scope}

\draw (n1) -- (n0);
\draw (n4) -- (n1);
\draw (n33) -- (n4);
\draw (n36) -- (n33);
\draw (n41) -- (n36);
\draw (n35) -- (n33);
\draw (n40) -- (n35);
\draw (n34) -- (n33);
\draw (n39) -- (n34);
\draw (n38) -- (n34);
\draw (n37) -- (n34);
\draw (n3) -- (n1);
\draw (n24) -- (n3);
\draw (n27) -- (n24);
\draw (n32) -- (n27);
\draw (n26) -- (n24);
\draw (n31) -- (n26);
\draw (n25) -- (n24);
\draw (n30) -- (n25);
\draw (n29) -- (n25);
\draw (n28) -- (n25);
\draw (n2) -- (n1);
\draw (n7) -- (n2);
\draw (n20) -- (n7);
\draw (n23) -- (n20);
\draw (n22) -- (n20);
\draw (n21) -- (n20);
\draw (n6) -- (n2);
\draw (n16) -- (n6);
\draw (n19) -- (n16);
\draw (n18) -- (n16);
\draw (n17) -- (n16);
\draw (n5) -- (n2);
\draw (n10) -- (n5);
\draw (n15) -- (n10);
\draw (n9) -- (n5);
\draw (n14) -- (n9);
\draw (n8) -- (n5);
\draw (n13) -- (n8);
\draw (n12) -- (n8);
\draw (n11) -- (n8);

\fill[black] (n0) circle (2pt);
\fill[blue] (n1) circle (2pt);
\fill[red] (n4) circle (2pt);
\fill[blue] (n33) circle (2pt);
\fill[red] (n36) circle (2pt);
\fill[blue] (n41) circle (2pt);
\fill[red] (n35) circle (2pt);
\fill[blue] (n40) circle (2pt);
\fill[blue] (n34) circle (2pt);
\fill[red] (n39) circle (2pt);
\fill[red] (n38) circle (2pt);
\fill[blue] (n37) circle (2pt);
\fill[red] (n3) circle (2pt);
\fill[blue] (n24) circle (2pt);
\fill[red] (n27) circle (2pt);
\fill[blue] (n32) circle (2pt);
\fill[red] (n26) circle (2pt);
\fill[blue] (n31) circle (2pt);
\fill[blue] (n25) circle (2pt);
\fill[red] (n30) circle (2pt);
\fill[red] (n29) circle (2pt);
\fill[blue] (n28) circle (2pt);
\fill[blue] (n2) circle (2pt);
\fill[red] (n7) circle (2pt);
\fill[blue] (n20) circle (2pt);
\fill[red] (n23) circle (2pt);
\fill[red] (n22) circle (2pt);
\fill[blue] (n21) circle (2pt);
\fill[red] (n6) circle (2pt);
\fill[blue] (n16) circle (2pt);
\fill[red] (n19) circle (2pt);
\fill[red] (n18) circle (2pt);
\fill[blue] (n17) circle (2pt);
\fill[blue] (n5) circle (2pt);
\fill[red] (n10) circle (2pt);
\fill[blue] (n15) circle (2pt);
\fill[red] (n9) circle (2pt);
\fill[blue] (n14) circle (2pt);
\fill[blue] (n8) circle (2pt);
\fill[red] (n13) circle (2pt);
\fill[red] (n12) circle (2pt);
\fill[blue] (n11) circle (2pt);
\end{tikzpicture}}
\hspace{0.2cm}
\subfloat[\label{fig:permutation_example:critical}critical]{\begin{tikzpicture}[scale=0.8]
\begin{scope}[yshift=7cm]
\node (chi) at (2.8,0) {
\begin{tabular}{lc|ccc}
\multicolumn{2}{c}{} & \multicolumn{3}{l}{\scalebox{0.7}{$k\rightarrow$}} \\
\multicolumn{2}{c|}{$\chi^b_i(k)$} & 1 & 2 & 3\\
\hline
\multirow{2}{*}{\scalebox{0.7}{\rotatebox{-90}{\rotatebox{90}{$i$} $\rightarrow$}}}
  & 1 & 2 & \\
  & 2 & 1 & 2 & 1
\end{tabular}
};
\end{scope}

\begin{scope}[xscale=0.28]
\coordinate (n0) at (10.0,0);
\coordinate (n1) at (10.0,1);
\coordinate (n4) at (2.0,2);
\coordinate (n33) at (2.0,3);
\coordinate (n36) at (0.0,4);
\coordinate (n41) at (0,5);
\coordinate (n35) at (2.0,4);
\coordinate (n40) at (1,5);
\coordinate (n39) at (2,5);
\coordinate (n38) at (3,5);
\coordinate (n34) at (4.0,4);
\coordinate (n37) at (4,5);
\coordinate (n3) at (10.0,2);
\coordinate (n16) at (6.0,3);
\coordinate (n29) at (6.0,4);
\coordinate (n32) at (5,5);
\coordinate (n31) at (6,5);
\coordinate (n30) at (7,5);
\coordinate (n15) at (10.0,3);
\coordinate (n23) at (8.0,4);
\coordinate (n28) at (8,5);
\coordinate (n22) at (10.0,4);
\coordinate (n27) at (9,5);
\coordinate (n26) at (10,5);
\coordinate (n25) at (11,5);
\coordinate (n21) at (12.0,4);
\coordinate (n24) at (12,5);
\coordinate (n14) at (14.0,3);
\coordinate (n17) at (14.0,4);
\coordinate (n20) at (13,5);
\coordinate (n19) at (14,5);
\coordinate (n18) at (15,5);
\coordinate (n2) at (18.0,2);
\coordinate (n5) at (18.0,3);
\coordinate (n8) at (16.0,4);
\coordinate (n13) at (16,5);
\coordinate (n7) at (18.0,4);
\coordinate (n12) at (17,5);
\coordinate (n11) at (18,5);
\coordinate (n10) at (19,5);
\coordinate (n6) at (20.0,4);
\coordinate (n9) at (20,5);
\end{scope}

\draw (n1) -- (n0);
\draw (n4) -- (n1);
\draw (n33) -- (n4);
\draw (n36) -- (n33);
\draw (n41) -- (n36);
\draw (n35) -- (n33);
\draw (n40) -- (n35);
\draw (n39) -- (n35);
\draw (n38) -- (n35);
\draw (n34) -- (n33);
\draw (n37) -- (n34);
\draw (n3) -- (n1);
\draw (n16) -- (n3);
\draw (n29) -- (n16);
\draw (n32) -- (n29);
\draw (n31) -- (n29);
\draw (n30) -- (n29);
\draw (n15) -- (n3);
\draw (n23) -- (n15);
\draw (n28) -- (n23);
\draw (n22) -- (n15);
\draw (n27) -- (n22);
\draw (n26) -- (n22);
\draw (n25) -- (n22);
\draw (n21) -- (n15);
\draw (n24) -- (n21);
\draw (n14) -- (n3);
\draw (n17) -- (n14);
\draw (n20) -- (n17);
\draw (n19) -- (n17);
\draw (n18) -- (n17);
\draw (n2) -- (n1);
\draw (n5) -- (n2);
\draw (n8) -- (n5);
\draw (n13) -- (n8);
\draw (n7) -- (n5);
\draw (n12) -- (n7);
\draw (n11) -- (n7);
\draw (n10) -- (n7);
\draw (n6) -- (n5);
\draw (n9) -- (n6);

\fill[black] (n0) circle (2pt);
\fill[blue] (n1) circle (2pt);
\fill[red] (n4) circle (2pt);
\fill[blue] (n33) circle (2pt);
\fill[red] (n36) circle (2pt);
\fill[blue] (n41) circle (2pt);
\fill[blue] (n35) circle (2pt);
\fill[red] (n40) circle (2pt);
\fill[blue] (n39) circle (2pt);
\fill[red] (n38) circle (2pt);
\fill[red] (n34) circle (2pt);
\fill[blue] (n37) circle (2pt);
\fill[blue] (n3) circle (2pt);
\fill[red] (n16) circle (2pt);
\fill[blue] (n29) circle (2pt);
\fill[red] (n32) circle (2pt);
\fill[blue] (n31) circle (2pt);
\fill[red] (n30) circle (2pt);
\fill[blue] (n15) circle (2pt);
\fill[red] (n23) circle (2pt);
\fill[blue] (n28) circle (2pt);
\fill[blue] (n22) circle (2pt);
\fill[red] (n27) circle (2pt);
\fill[blue] (n26) circle (2pt);
\fill[red] (n25) circle (2pt);
\fill[red] (n21) circle (2pt);
\fill[blue] (n24) circle (2pt);
\fill[red] (n14) circle (2pt);
\fill[blue] (n17) circle (2pt);
\fill[red] (n20) circle (2pt);
\fill[blue] (n19) circle (2pt);
\fill[red] (n18) circle (2pt);
\fill[red] (n2) circle (2pt);
\fill[blue] (n5) circle (2pt);
\fill[red] (n8) circle (2pt);
\fill[blue] (n13) circle (2pt);
\fill[blue] (n7) circle (2pt);
\fill[red] (n12) circle (2pt);
\fill[blue] (n11) circle (2pt);
\fill[red] (n10) circle (2pt);
\fill[red] (n6) circle (2pt);
\fill[blue] (n9) circle (2pt);
\end{tikzpicture}}
\hspace{0.2cm}
\subfloat[\label{fig:permutation_example:transient}transient]{\begin{tikzpicture}[scale=0.8]
\begin{scope}[yshift=7cm]
\node (chi) at (2.8,0) {
\begin{tabular}{lc|ccc}
\multicolumn{2}{c}{} & \multicolumn{3}{l}{\scalebox{0.7}{$k\rightarrow$}} \\
\multicolumn{2}{c|}{$\chi^c_i(k)$} & 1 & 2 & 3 \\
\hline
\multirow{2}{*}{\scalebox{0.7}{\rotatebox{-90}{\rotatebox{90}{$i$} $\rightarrow$}}}
  & 1 & 2 & \\
  & 2 & 1 & 1 & 2
\end{tabular}
};
\end{scope}

\begin{scope}[xscale=0.28]
\coordinate (n0) at (12.125,0);
\coordinate (n1) at (12.125,1);
\coordinate (n4) at (5.75,2);
\coordinate (n25) at (2.5,3);
\coordinate (n36) at (1.0,4);
\coordinate (n41) at (0,5);
\coordinate (n40) at (1,5);
\coordinate (n39) at (2,5);
\coordinate (n35) at (3.0,4);
\coordinate (n38) at (3,5);
\coordinate (n34) at (4.0,4);
\coordinate (n37) at (4,5);
\coordinate (n24) at (6.0,3);
\coordinate (n30) at (6.0,4);
\coordinate (n33) at (5,5);
\coordinate (n32) at (6,5);
\coordinate (n31) at (7,5);
\coordinate (n23) at (9.0,3);
\coordinate (n26) at (9.0,4);
\coordinate (n29) at (8,5);
\coordinate (n28) at (9,5);
\coordinate (n27) at (10,5);
\coordinate (n3) at (13.5,2);
\coordinate (n14) at (13.5,3);
\coordinate (n17) at (12.0,4);
\coordinate (n22) at (11,5);
\coordinate (n21) at (12,5);
\coordinate (n20) at (13,5);
\coordinate (n16) at (14.0,4);
\coordinate (n19) at (14,5);
\coordinate (n15) at (15.0,4);
\coordinate (n18) at (15,5);
\coordinate (n2) at (18.5,2);
\coordinate (n5) at (18.5,3);
\coordinate (n8) at (17.0,4);
\coordinate (n13) at (16,5);
\coordinate (n12) at (17,5);
\coordinate (n11) at (18,5);
\coordinate (n7) at (19.0,4);
\coordinate (n10) at (19,5);
\coordinate (n6) at (20.0,4);
\coordinate (n9) at (20,5);
\end{scope}

\draw (n1) -- (n0);
\draw (n4) -- (n1);
\draw (n25) -- (n4);
\draw (n36) -- (n25);
\draw (n41) -- (n36);
\draw (n40) -- (n36);
\draw (n39) -- (n36);
\draw (n35) -- (n25);
\draw (n38) -- (n35);
\draw (n34) -- (n25);
\draw (n37) -- (n34);
\draw (n24) -- (n4);
\draw (n30) -- (n24);
\draw (n33) -- (n30);
\draw (n32) -- (n30);
\draw (n31) -- (n30);
\draw (n23) -- (n4);
\draw (n26) -- (n23);
\draw (n29) -- (n26);
\draw (n28) -- (n26);
\draw (n27) -- (n26);
\draw (n3) -- (n1);
\draw (n14) -- (n3);
\draw (n17) -- (n14);
\draw (n22) -- (n17);
\draw (n21) -- (n17);
\draw (n20) -- (n17);
\draw (n16) -- (n14);
\draw (n19) -- (n16);
\draw (n15) -- (n14);
\draw (n18) -- (n15);
\draw (n2) -- (n1);
\draw (n5) -- (n2);
\draw (n8) -- (n5);
\draw (n13) -- (n8);
\draw (n12) -- (n8);
\draw (n11) -- (n8);
\draw (n7) -- (n5);
\draw (n10) -- (n7);
\draw (n6) -- (n5);
\draw (n9) -- (n6);

\fill[black] (n0) circle (2pt);
\fill[blue] (n1) circle (2pt);
\fill[blue] (n4) circle (2pt);
\fill[blue] (n25) circle (2pt);
\fill[blue] (n36) circle (2pt);
\fill[blue] (n41) circle (2pt);
\fill[red] (n40) circle (2pt);
\fill[red] (n39) circle (2pt);
\fill[red] (n35) circle (2pt);
\fill[blue] (n38) circle (2pt);
\fill[red] (n34) circle (2pt);
\fill[blue] (n37) circle (2pt);
\fill[red] (n24) circle (2pt);
\fill[blue] (n30) circle (2pt);
\fill[blue] (n33) circle (2pt);
\fill[red] (n32) circle (2pt);
\fill[red] (n31) circle (2pt);
\fill[red] (n23) circle (2pt);
\fill[blue] (n26) circle (2pt);
\fill[blue] (n29) circle (2pt);
\fill[red] (n28) circle (2pt);
\fill[red] (n27) circle (2pt);
\fill[red] (n3) circle (2pt);
\fill[blue] (n14) circle (2pt);
\fill[blue] (n17) circle (2pt);
\fill[blue] (n22) circle (2pt);
\fill[red] (n21) circle (2pt);
\fill[red] (n20) circle (2pt);
\fill[red] (n16) circle (2pt);
\fill[blue] (n19) circle (2pt);
\fill[red] (n15) circle (2pt);
\fill[blue] (n18) circle (2pt);
\fill[red] (n2) circle (2pt);
\fill[blue] (n5) circle (2pt);
\fill[blue] (n8) circle (2pt);
\fill[blue] (n13) circle (2pt);
\fill[red] (n12) circle (2pt);
\fill[red] (n11) circle (2pt);
\fill[red] (n7) circle (2pt);
\fill[blue] (n10) circle (2pt);
\fill[red] (n6) circle (2pt);
\fill[blue] (n9) circle (2pt);
\end{tikzpicture}}
\end{center}
\caption{\label{fig:permutation_example}Different planar embeddings of the same tree.}
\end{figure}

\begin{exam}
Consider a generating graph with $2$ vertex types and adjacency matrix
\begin{equation*}
D = \begin{pmatrix}
     0 & 1 \\
     2 & 1
    \end{pmatrix}.
\end{equation*}
There are three possible planar embeddings $\chi^a, \chi^b$ and $\chi^c$, which are shown in 
Figure \ref{fig:permutation_example} together with the directed covers they generate. On these trees
we perform transfinite rotor-router walks where the rotors are initially distributed according to
the uniform distribution on both types on vertices. 
The following table shows the generating functions,
the first moment matrix and spectral radius of the associated MBP in all cases.
\begin{center}
\scalebox{0.95}{
\begin{tabular}{c|lcl}
\multicolumn{1}{c}{} & generating function & $1^{\mathrm{st}}$ moment matrix & spectral radius \\
\midrule
\begin{minipage}[c]{2cm}
\begin{center}
$\chi^a$ \\[1ex]
Figure \ref{fig:permutation_example:recurrent}
\end{center}
\end{minipage}
& 
$\begin{aligned}
f^{1}(\boldsymbol{z}) &= \frac{1}{2} (z_2 + 1) \\[0.5ex]
f^{2}(\boldsymbol{z}) &= \frac{1}{4} (z_1^{2} z_2 + z_1^{2} + z_1 + 1)
\end{aligned}$ &
$\displaystyle M = \left(\begin{array}{rr} 0 & \frac{1}{2} \\[1ex] \frac{5}{4} & \frac{1}{4} \end{array}\right)$ &
$\r(M) = \frac{\sqrt{41} + 1}{8} < 1$ \\
\midrule
\begin{minipage}[c]{2cm}
\begin{center}
$\chi^b$ \\[1ex]
Figure \ref{fig:permutation_example:critical}
\end{center}
\end{minipage}
& 
$\begin{aligned}
f^{1}(\boldsymbol{z}) &= \frac{1}{2} (z_2 + 1) \\[0.5ex]
f^{2}(\boldsymbol{z}) &= \frac{1}{4} (z_1^2 z_2 + z_1 z_2 + z_1 + 1)
\end{aligned}$ &
$\displaystyle M = \left(\begin{array}{rr} 0 & \frac{1}{2} \\[1ex] 1 & \frac{1}{2} \end{array}\right)$ &
$\r(M) = 1$ \\
\midrule
\begin{minipage}[c]{2cm}
\begin{center}
$\chi^c$ \\[1ex]
Figure \ref{fig:permutation_example:transient}
\end{center}
\end{minipage}
& 
$\begin{aligned}
f^{1}(\boldsymbol{z}) &= \frac{1}{2} (z_2 + 1) \\[0.5ex]
f^{2}(\boldsymbol{z}) &= \frac{1}{4} (z_1^2 z_2 + z_1 z_2 + z_2 + 1)
\end{aligned}$ &
$\displaystyle M = \left(\begin{array}{rr} 0 & \frac{1}{2} \\[1ex] \frac{3}{4} & \frac{3}{4} \end{array}\right)$ &
$\r(M) = \frac{\sqrt{33} + 3}{8} > 1$ \\
\bottomrule
\end{tabular}
}
\end{center}
Hence depending only on the planar embedding the rotor-router walk is either recurrent (for $\chi^a$), recurrent in the critical case
(for $\chi^b$) or transient (for $\chi^c$). Another interpretation of this example
is: on the same tree (forgetting now about the planar embedding) different rotor sequence
gives rise to different behaviour of the rotor-router walk, for random initial configuration of rotors.
\end{exam}

\bibliography{rotor_walks}{}
\bibliographystyle{mypaperhep}

\end{document}